\renewcommand{\ALG@name}{\sc Algorithm}
\title{Computing local minimizers in polynomial optimization under genericity conditions}
\author{Vu Trung Hieu\thanks{Norwegian University of Science and Technology, 7034 Trondheim, Norway
        (\email{trung.h.vu@ntnu.no})}
\and Akiko Takeda\thanks{The University of Tokyo, 113-8656, Tokyo, Japan; RIKEN, 103-0027, Tokyo, Japan
        (\email{takeda@mist.i.u-tokyo.ac.jp, akiko.takeda@riken.jp})}}
\DeclareMathOperator{\R}{\mathbb{R}}
\DeclareMathOperator{\C}{\mathbb{C}}
\DeclareMathOperator{\N}{\mathbb{N}}
\DeclareMathOperator{\I}{\mathcal{I}}
\DeclareMathOperator{\Sp}{\mathbb{S}}
\DeclareMathOperator{\Oo}{\mathcal{O}}
\DeclareMathOperator{\LT}{LT}
\DeclareMathOperator{\lex}{lex}
\DeclareMathOperator{\rad}{rad}
\DeclareMathOperator{\loc}{local}
\DeclareMathOperator{\glo}{global}
\DeclareMathOperator{\Mat}{\mathcal{M}}
\DeclareMathOperator{\eps}{\varepsilon}
\DeclareMathOperator{\gralom}{\mathtt{GRALOM}}
\DeclareMathOperator{\gralomplus}{\mathtt{GRALOM}^+}
\DeclareMathOperator{\grulom}{\mathtt{GRULOM}}
\DeclareMathOperator{\grulomplus}{\mathtt{GRULOM}^+}
\DeclareMathOperator{\shape}{\mathtt{shape}}
\begin{document}
\maketitle
\smallskip
\setlength\parindent{0pt}
\parskip 5pt
\setlength{\baselineskip}{12.7truept}

\begin{abstract}
In this paper, we focus on computing local minimizers of a multivariate polyno\-mial optimization problem under certain genericity conditions. By using a technique in computer algebra and the second-order optimality condition, we provide  a univariate representation for the set of local minimizers. In particular, for the unconstrained problem, i.e. the constraint set is $\R^n$, the coordinates of all local minimizers can be represented by the values of $n$ univariate polynomials at real roots of a system including a univariate polynomial equation and a univariate polynomial matrix inequality. We also develop the technique for constrained problems having equality/inequality constraints. Based on the above technique, we design symbolic algorithms to enumerate the local minimizers and provide some experimental examples based on hybrid symbolic-numerical computations. For the case that the genericity conditions fail, at the end of the paper, we propose a perturbation technique to compute approximately a global minimizer provided that the constraint set is compact.
\end{abstract}

\begin{keywords}
polynomial optimization, local minimizer, second-order optimality condition, zero-dimensional ideal, radical of an ideal, reduced Gr\"{o}bner basis, symbolic algorithm
\end{keywords}

\begin{MSCcodes}
    14P05, 13P10, 90C23
\end{MSCcodes}

\section{Introduction}\label{sec:intro}We denote by $\R$ the field of real numbers and by $x$ the $n$-tuple of variables $(x_1,\dots,x_n)$.
Let $f, h_1,\dots,h_m$ be $m+1$ polynomials in $n$ variables $x$ with real coefficients. The real algebraic set given by $h=(h_1,\dots,h_m)$ is denoted by $V_{\R}(h)$,
\[V_{\R}(h)=\{x\in{\R}^n:h_1(x)=\dots =h_m(x)=0\}.\]
A point $u\in V_{\R}(h)$ is a \textit{local minimizer} of $f$ over $V_{\R}(h)$ if there exists $\varepsilon>0$ such that $f(u)\leq f(x)$ for any $x\in V_{\R}(h)$ satisfying $\|x-u\|< \varepsilon$. Then, one says that the value  $f(u)$ is a local minimum of $f$ over  $V_{\R}(h)$.

The set of local minimizers of $f$ over $V_{\R}(h)$, denoted by $\loc(f,V_{\R}(h))$, may have infinitely many points;
for example, the set of local minimizers of $f(x_1,x_2)=(x_1-x_2)^2$ over $\R^2$ is the line $\{(x_1,x_2)\in\R^2:x_1=x_2\}$.
However, when $f,h$ are generic, according to Nie's result \cite[Theorem 1.2]{nie2014optimality}, the set has finitely many points. For such cases, in this paper, we introduce a  method to enumerate all points of the set. 

We would like to remark that, in the literature of polynomial optimization, most work is for computing global minima or infima. Computing global minima in polynomial optimization is usually based on sum of squares relaxations, for example, by  Lasserre \cite{lasserre2001global}, Parrilo \cite{parrilo2003semidefinite}, Parrilo and Sturmfels \cite{parrilo2001minimizing}, and Nie, Demmel and Sturmfels \cite{nie2006}. We refer monographs by Lasserre \cite{lasserre2009moments} or Laurent \cite{laurent2009} to this approach.
Besides, symbolic/exact algorithms can be used to compute global minima or infima such as quantifier eliminat\-ions by Basu, Pollack, and Roy in \cite{basu2013},
or without using quantifier eliminations by Safey El Din in \cite{safey2008computing}. In the literature, we can see some exact algorithms related to this topic, for example in \cite{greuet2014,berthomieu2022computing,mai2022symbolic}.

Meanwhile, local minima and minimizers are important not only in theory but also in applica\-tions \cite{nie2015hierarchy}.
For example, in modern computational chemistry, the potential energy surface is a fundamental concept to describe the energy of a chemical system as a function of its geometry.
The local minimizers correspond to stable structure, from which one may calculate energy barriers and minimum-energy paths that describe chemical reactivity, and there are various papers (e.g., \cite{he2015,morrow2021}) that attempt to find as many local minimizers as possible.
There is also a demand for methods to enumerate local minimizers in machine learning.
Some nonconvex machine learning models such as deep learning \cite{kawaguchi2016} and
low-rank matrix optimization problems based on Burer-Monteiro factorization \cite{yalcin2022} are known to have no
{\it spurious} local minimizers, which indicates that
there are no local minimizers that have larger objective values than the optimal value, i.e.,  every local minimizer is a global minimizer.
The next step in the research stream would be to enumerate local minimizers in order to determine which of multiple minimizers achieve the best prediction performance. Indeed, there is some study \cite{mehta2022} enumerating minimizers of a deep learning model.

Characterizations of local minimizers of quadratic functions over Euclidean balls, spheres or polyhedra have been investigated, for example, in \cite{martinez1994local,phu2001stability}. As shown in \cite{ahmadi2022complexity} by Ahmadi and Zhang, the problem of deciding whether a quadratic function has a local minimizer over polyhedron is NP-hard. Pardalos and Schnitger proved that it is NP-hard to test whether a given point is a local minimizer of constrained quadratic program\-ming  \cite{pardalos1988checking}.
Recently, some characterizations of local minimizers of semi-algebraic functions from the view\-point of tangencies are discussed in \cite{pham2020local} by Pham.

In \cite{nie2015hierarchy}, Nie introduces a method to compute the hierarchy of critical values and local minima of a polynomial optimization problem based on second-order optimality conditions and semi-definite relaxations. In particular, to compute a point satisfying the first and
second-order necessary optimality conditions, a sequence of
semi-definite relaxations is constructed; Moreover, under some genericity conditions,  each constructed sequence has finite convergence.

\textit{Contributions}. This work is motivated by Nie's result for the generic problems from \cite{nie2015hierarchy}. Our study aims at enumerating all local minimizers of polynomial optimization problems under genericity conditions without using semi-definite relaxations.

\begin{itemize}
    \item \textit{Unconstrained problem}. We consider the unconstrained problem given by a multivariate polynomial $f$ under genericity conditions which saying that the complex gradient variety $V_{\C}(\I_{\nabla}(f))$ has finitely many points and the Hessian matrix $\nabla_{x}^2 f$ is non-degenerate over the real gradient variety.
    In Theorem \ref{thm:local}, thanks to Shape Lemma (Lemma \ref{lm:shape0}) and the second-order optimality condition, we prove existence of univariate representations for the set of all local minimizers. Its proof allows us to design a symbolic algorithm to enumerate the local minimizers.
    Moreover, we show in Theorem~\ref{thm:global} that if the infimum of $f$ over $\R^n$ is reached, then this value can be computed though a univariate optimi\-zation problem.

    \item \textit{Constrained problem}. We also consider polynomial optimization problems with $m$ equality/inequality constraints under genericity conditions. We prove existence of univariate representations in Theorem \ref{thm:local2} and design algorithms to compute the set of local minimizers and global minimizers as well. We furthermore study the case that the genericity conditions are violated; there is a perturbation method to compute approximately a global minimizer of the considered problem.

    \item \textit{Implementation}. The algorithms are implemented in {\sc Maple}. We provide experimental examples on Rosenbrock functions with both unconstrained and constrained cases.

\end{itemize}

The remaining part of this paper consists of  five sections. Section \ref{sec:pre}
gives some definitions, notations, and auxiliary results on computational commutative algebra and optimization theory. Section \ref{sec:unconstraited}
establishes results concerning
the unconstrained case.
Results on constrained cases with equalities and inequalities are shown in Sections \ref{sec:eq} and \ref{sec:ineq}, respectively.
The last section, says Section \ref{sec:posdim}, handles the case that the involved gradient variety has positive dimension.

\section{Preliminaries}\label{sec:pre}
This section recalls basic notions and
results from computa\-tional commutative algebra and optimization theory;
Further details can be found in \cite{cox2013} and \cite{bertsekas2nd}.

\subsection{Zero-dimensional ideals and the Shape Lemma}
We denote by $\C$ the field of complex numbers that is the algebraic closure of the field $\R$. The ring of all polynomials in variables $x$ with real coefficients is denoted by $\R[x]$.
An additive subgroup $\I$ of $\R[x]$  is said
to be an \textit{ideal} of $\R[x]$ if $pg\in \I$ for any $p\in \I$ and
$g\in\R[x]$. Given $p_1,\dots,p_s $ in $ \R[x]$, we denote by
$\left\langle
p_1,\dots,p_s \right\rangle$ the ideal generated by $p_1,\dots,p_s$. If $\I$
is an ideal of $\R[x]$ then, according to Hilbert's basis theorem
(see, for example Theorem~4 in \cite[Chapter 2, \S 5]{cox2013}),
there exist $p_1,\dots,p_s \in \R[x]$ such that
$\I=\left\langle p_1,\dots,p_s \right\rangle$.

Let $\I$ be an ideal of $\R[x]$. The \textit{complex
    algebraic variety} associated to $\I$ is defined as
\[V_{\C}(\I):=\{x\in{\C}^n:g(x)=0, \forall g\in \I\}.\] The \textit{real} algebraic
variety
associated to $\I$ is $V_{\R}(\I)=V_{\C}(\I)\cap\R^n$. Recall that the ideal $\I$
is \textit{zero-dimensional} if the
cardinality of $V_{\C}(\I)$, denoted by $\# V_{\C}(\I)$, is finite. The \textit{radical} of $\I$, denoted by $\rad(\I)$, is the following set:
\[\{g\in\R[x]:g^k\in \I \text{ for some }k \in \N\}.\]
The set
$\rad(\I)$ is also an ideal that contains $\I$.
One says that the ideal $\I$ is \textit{radical} if $\I=\rad(\I)$.

The following lemma asserts that the radical of an ideal is radical; Moreover, the complex (real) varieties of $\I$ and its radical are coincident, see, for example Lemma~5 and Theorem~7  in \cite[Chapter 4, \S 2]{cox2013}:

\begin{lemma}\label{lm:radC}Let $\I$ be an ideal of $\R[x]$. Then, $\rad(\I)$ is radical. Moreover, one has $V_{\C}(\I)=V_{\C}(\rad(\I))$, hence $V_{\R}(\I)=V_{\R}(\rad(\I))$.
\end{lemma}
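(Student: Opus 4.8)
The plan is to establish the three assertions in turn, relying only on the definition of the radical and on the fact, already recorded just above the statement, that $\rad(\I)$ is an ideal containing $\I$.

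First I would show $\rad(\rad(\I)) = \rad(\I)$, which is precisely the claim that $\rad(\I)$ is radical. The inclusion $\rad(\I) \subseteq \rad(\rad(\I))$ is automatic, since any ideal $J$ of $\R[x]$ satisfies $J \subseteq \rad(J)$. For the reverse inclusion, take $g \in \rad(\rad(\I))$; by definition $g^{k} \in \rad(\I)$ for some $k \in \N$, and applying the definition once more, $\bigl(g^{k}\bigr)^{\ell} = g^{k\ell} \in \I$ for some $\ell \in \N$, so $g \in \rad(\I)$.

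Next I would prove $V_{\C}(\I) = V_{\C}(\rad(\I))$. Because $\I \subseteq \rad(\I)$, imposing more vanishing conditions only shrinks the variety, so $V_{\C}(\rad(\I)) \subseteq V_{\C}(\I)$. For the opposite inclusion, let $x \in V_{\C}(\I)$ and let $g \in \rad(\I)$ be arbitrary. By definition $g^{k} \in \I$ for some $k \in \N$, hence $g(x)^{k} = (g^{k})(x) = 0$. Since $\C$ is a field, it has no nonzero nilpotent elements, so $g(x) = 0$; as $g$ ranged over all of $\rad(\I)$, this gives $x \in V_{\C}(\rad(\I))$, and thus $V_{\C}(\I) \subseteq V_{\C}(\rad(\I))$.

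Finally, intersecting both sides of $V_{\C}(\I) = V_{\C}(\rad(\I))$ with $\R^{n}$ yields $V_{\R}(\I) = V_{\R}(\rad(\I))$ directly from the definition $V_{\R}(\cdot) = V_{\C}(\cdot) \cap \R^{n}$. There is no serious technical obstacle here; the one point worth isolating is the use of $\C$ being reduced (indeed an integral domain) when passing from $g(x)^{k} = 0$ to $g(x) = 0$, which is exactly the property that makes the complex variety insensitive to replacing an ideal by its radical.
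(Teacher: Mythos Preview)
Your proof is correct. The paper does not actually prove this lemma but simply cites Lemma~5 and Theorem~7 in \cite[Chapter~4, \S 2]{cox2013}; your argument from the definitions is the standard one and is essentially what one finds in that reference.
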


Let $<$ be a monomial ordering on $\R[x]$
and $\I\neq \{0\}$ be an ideal. We denote
by $\LT_<(\I)$ the set of all leading terms $\LT_<(p)$ of $p\in \I$, and  by
$\left\langle \LT_<(\I)\right\rangle $ the ideal generated by the elements
of $\LT_<(\I)$. A subset $G=\{p_1,\dots,p_s\}$ of $\I$ is said to be a \textit{Gr\"{o}bner
    basis} of $\I$ with restpect to the monomial order $<$ if
\[ \left\langle
\LT_<(p_1),\dots,\LT_<(p_s)\right\rangle=\left\langle\LT_<(\I)\right\rangle.
\]
Note that every ideal in $\R[x]$ has a Gr\"{o}bner basis. A Gr\"{o}bner
basis
$G$ is \textit{reduced}
if
the two following conditions hold: i) the leading coefficient of $p$ is $1$,
for
all $p\in G$; ii) for all $p\in G$, there are no monomials of $p$ lying in $\left\langle
\LT_<(G\setminus\{p\})\right\rangle$. Every ideal $\I$ has a unique reduced
Gr\"{o}bner basis for a given monomial ordering $<$. We refer the reader to \cite{cox2013} for more details.

Let $\I$ be a zero-dimensional and radical ideal in $\R[x]$, and
$G$ be its reduced Gr\"{o}bner basis with respect to the
lexicographical order $x_1<_{\lex} \cdots <_{\lex} x_n$. One says that $\I$ is
in \textit{shape position} if $G$ has the following shape:
\begin{equation}\label{f:sysshape}
    G=\{w,x_2-v_2,\dots,x_n-v_n\},
\end{equation}
where $w,v_2,\dots,v_n$ are polynomials in
$\R[x_1]$ and $\deg w=\# V_{\C}(\I)$.  
When the order $<_{\lex}$ is clear from the context, we omit the subscript $\lex$.

The following lemma, named Shape Lemma, gives us a criterion for the shape
position of an ideal.

\begin{lemma}{\rm{(Shape Lemma, \cite{gianni89})}}\label{lm:shape0}
    Let $<_{\lex}$ be a lexicographic monomial order in $\R[x]$. Suppose that $\I$ is a zero-dimensional and radical ideal. If the points in $V_{\C}(\I)$  have distinct
    $x_1$-coordinates, then $\I$ is in shape position as in \eqref{f:sysshape}, where $v_2,\dots,v_n$ are
    polynomials in $\R[x_1]$ of degrees at most $\# V_{\C}(\I)-1$.
\end{lemma}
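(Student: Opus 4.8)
The plan is to reduce the statement to linear algebra in the quotient ring $A=\R[x]/\I$, which is a finite-dimensional $\R$-vector space of dimension $d:=\#V_{\C}(\I)$ because $\I$ is zero-dimensional and radical (the base field has characteristic zero). Write $V_{\C}(\I)=\{\xi^{(1)},\dots,\xi^{(d)}\}$ with pairwise distinct first coordinates $\xi^{(1)}_1,\dots,\xi^{(d)}_1$. After extending scalars, $\I_{\C}:=\I\cdot\C[x]$ is again zero-dimensional and radical, so the Chinese Remainder Theorem together with the Nullstellensatz yields an isomorphism $\C[x]/\I_{\C}\xrightarrow{\ \sim\ }\C^{d}$, $\bar g\mapsto(g(\xi^{(i)}))_{1\le i\le d}$, under which $\bar{x}_1$ corresponds to the vector $(\xi^{(i)}_1)_i$. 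The two facts I would use from this setup are $\dim_{\R}A=\dim_{\C}(\C[x]/\I_{\C})=d$ and $\I_{\C}\cap\R[x]=\I$.

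First I would produce the univariate polynomial $w$ and the desired basis. Put $w(x_1)=\prod_{i=1}^{d}(x_1-\xi^{(i)}_1)$; then $\deg w=d$ since the roots are distinct, $w\in\R[x_1]$ since complex conjugation permutes the set $\{\xi^{(i)}_1\}$, and $w\in\I$ since $w$ vanishes on $V_{\C}(\I)$ and hence lies in $\I_{\C}\cap\R[x]=\I$. Under the isomorphism above the elements $1,\bar{x}_1,\dots,\bar{x}_1^{\,d-1}$ map to a Vandermonde system in the distinct scalars $\xi^{(i)}_1$, so they are $\C$-linearly independent, hence $\R$-linearly independent in $A$; being $d$ elements of the $d$-dimensional space $A$, they form an $\R$-basis. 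Therefore, for each $j=2,\dots,n$ there is a unique $v_j\in\R[x_1]$ with $\deg v_j\le d-1$ and $\bar{x}_j=v_j(\bar{x}_1)$ in $A$, i.e.\ $x_j-v_j(x_1)\in\I$.

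Next I would show $\I=\langle G\rangle$ with $G=\{w,\,x_2-v_2,\dots,x_n-v_n\}$. The inclusion $\langle G\rangle\subseteq\I$ is immediate. For the converse, given $g\in\I$, substitute $x_j\mapsto v_j(x_1)$ for $j\ge2$ to get $\tilde g(x_1)=g(x_1,v_2(x_1),\dots,v_n(x_1))$; a telescoping argument shows $g-\tilde g\in\langle x_2-v_2,\dots,x_n-v_n\rangle\subseteq\langle G\rangle$, and evaluating at $\xi^{(i)}_1$ (noting $v_j(\xi^{(i)}_1)=\xi^{(i)}_j$ because $x_j-v_j\in\I$) gives $\tilde g(\xi^{(i)}_1)=g(\xi^{(i)})=0$ for all $i$, whence $w\mid\tilde g$ in $\R[x_1]$ and $\tilde g\in\langle w\rangle$; thus $g\in\langle G\rangle$. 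Finally I would verify that $G$ (with $w$ made monic) is the reduced Gr\"{o}bner basis of $\I$ for the order $x_1<_{\lex}\cdots<_{\lex}x_n$: here $\LT_<(w)=x_1^{d}$ and $\LT_<(x_j-v_j)=x_j$ (since $x_j>_{\lex}x_1^{k}$ for all $k$ and $v_j\in\R[x_1]$), so the monomials outside $\langle x_1^{d},x_2,\dots,x_n\rangle$ are exactly $1,x_1,\dots,x_1^{\,d-1}$ — precisely $d$ of them. Since these always span $\R[x]/\langle G\rangle=A$ and $\dim_{\R}A=d$, they are linearly independent, so $\langle\LT_<(G)\rangle$ cannot be properly contained in $\langle\LT_<(\I)\rangle$; hence $G$ is a Gr\"{o}bner basis. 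Reducedness is then clear (leading coefficients $1$; every non-leading monomial occurring in $G$ is a power $x_1^{k}$ with $k\le d-1$, divisible neither by $x_1^{d}$ nor by any $x_\ell$ with $\ell\ge2$), so by uniqueness $G$ is the reduced Gr\"{o}bner basis, it has the shape \eqref{f:sysshape}, $\deg w=d=\#V_{\C}(\I)$, and $\deg v_j\le d-1$.

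The step I expect to be the main obstacle is the last one: upgrading ``$G$ generates $\I$'' to ``$G$ is a \emph{Gr\"{o}bner} basis of $\I$'' is not automatic, and the cleanest argument I know is exactly the dimension count of standard monomials against $\dim_{\R}A$ carried out above. A secondary point requiring care is the bookkeeping between $\R$ and $\C$ in the first two paragraphs — the facts $\dim_{\R}\R[x]/\I=\dim_{\C}\C[x]/\I_{\C}$, preservation of radicality under extension of the (perfect) base field, and $\I_{\C}\cap\R[x]=\I$ — which is precisely what makes the $v_j$ come out with real coefficients.
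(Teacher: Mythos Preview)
The paper does not prove Lemma~\ref{lm:shape0}: it is stated as the Shape Lemma and attributed to \cite{gianni89} without argument, so there is no ``paper's own proof'' to compare against. Your proposal is therefore a self-contained proof of a result the authors take as known.

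Your argument is correct and is in fact the standard route to the Shape Lemma: pass to $\C$, use the evaluation isomorphism $\C[x]/\I_{\C}\cong\C^{d}$ supplied by radicality and the Nullstellensatz, read off $w$ and the $v_j$ from the Vandermonde basis $1,\bar x_1,\dots,\bar x_1^{\,d-1}$, and then certify the Gr\"obner property by counting standard monomials against $\dim_{\R}A$. The points you flag as delicate are exactly the right ones, and you handle them properly: the $\R$--$\C$ bookkeeping (preservation of radicality over a perfect field, $\I_{\C}\cap\R[x]=\I$, and $\dim_{\R}\R[x]/\I=\dim_{\C}\C[x]/\I_{\C}$) is what forces $w$ and the $v_j$ to have real coefficients, and the dimension count is indeed the cleanest way to go from ``$G$ generates $\I$'' to ``$G$ is a Gr\"obner basis'' without running Buchberger's criterion. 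One cosmetic remark: when you write $\LT_<(x_j-v_j)=x_j$, you are implicitly using that $v_j$ cannot be a nonzero constant plus zero higher terms in a way that affects the leading term --- but of course any monomial in $x_1$ alone is $<_{\lex} x_j$, so this is fine regardless of $\deg v_j$.
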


\subsection{Local minimizers of unconstrained problems}
Denote by $\Mat(n\times m,\mathcal{R})$ the set of all $n\times m$-matrices with entries in a ring $\mathcal{R}$; In this paper, $\mathcal{R}$ is $\R$ or $\R[t]$, where $t$ is a single variable.

Let $Q\in\Mat(n\times n,\R)$ be a symmetric matrix. The quadratic form associated to $Q$ is given by $p(x)=x^TQx$, for $x\in\R^n$. Let $K\subset \R^n$ be a nonempty subset. One says that $Q$ is \textit{positive semi-definite} with respect to $K$, denoted by $Q \succeq_K 0$, if $p(x)\geq 0$ for all $x\in K$ and that $Q$ is \textit{positive definite} with respect to $K$, denoted by $Q \succ_K 0$, if $p(x) > 0$ for all $x\in K$ and $x\neq 0$. One sees that, if $K$ is a non-trivial cone, then checking positive semi-definiteness (resp. positive definiteness) of $Q$ with respect to $K$ is equivalent to checking positivity of the minimum of $p$ over the compact set $K\cap \Sp$, where $\Sp$ is the unit sphere in $\R^n$.
When $K=\R^n$, we say $Q$ is positive semi-definite (resp. positive definite) and write $Q \succeq 0$ (resp. $Q \succ 0$) for short. It is well-known that $Q \succeq 0$ (resp. $Q \succ 0$) if $Q$ has only non-negative (resp. positive) eigenvalues.

Suppose that the entries of matrices $Q$ and $C$ are polynomials depending on a real variable $t$, i.e. $Q\in\Mat(n\times n,\R[t])$ and $C\in\Mat(m\times n,\R[t])$. We denote by $C(t)^{\perp}$ the linear subspace $\{u\in\R^n: C(t) u=0\}$. One says that $Q(t) \succeq_{C(t)^{\perp}} 0$ and $Q(t) \succ_{C(t)^{\perp}} 0$  are univariate polynomial matrix inequalities.

Let $f$ be a polynomial in $\R[x]$. We consider the problem of computing local minimizers of $f$ over $\R^n$. A point $x\in \R^n$ is a \textit{critical point} of $f$ over $\R^n$ if it is a real root of the system of the $n$ partial derivatives of $f$, i.e.:
\[\left\lbrace  \frac{\partial f}{\partial x_1}(x) =0 \,,\cdots,
\,\frac{\partial f}{\partial x_n}(x) = 0 \right\rbrace .\]
The \textit{gradient ideal} of $f$, denoted by $\I_{\nabla}(f)$, 
is generated by all partial derivatives of $f$. From the definition,
the set of critical point of $f$ is the real gradient variety $V_{\R}(\I_{\nabla}(f))$.

We now recall from \cite[Chapter 1]{bertsekas2nd} some fundamental results related to optimality conditions of the unconstrained problem.
Suppose that $x\in \R^n$ is a local minimizer of $f$ over $\R^n$. According to the necessary conditions for optimality \cite[Proposition 1.1.1]{bertsekas2nd}, $x$ belongs to $V_{\R}(\I_{\nabla}(f))$ and $\nabla^2 f$, the Hessian matrix of $f$, is positive semi-definite at $x$. Conversely, if $x$ is a critical point of $f$ and the second-order sufficiency condition holds, i.e. $\nabla^2 f(x)$ is positive definite, then $x$ is a strict local minimizer \cite[Proposition 1.1.2]{bertsekas2nd}.

\subsection{Local minimizers of constrained problems}
Assume that $f,h_1,\dots,h_m$ are polynomials in $\R[x]$, with $1\leq m < n$. The complex algebraic variety defined by $h$ is the following set:
\begin{equation}\label{eq:K}
    V_{\C}(h)=\{x\in\C^n: h_1(x)=\dots=h_m(x)=0\}.
\end{equation}
We recall some notions and fundamental results on local minimizers of $f$ over the real algebraic variety $V_{\R}(h)=V_{\C}(h)\cap\R^n$.

A point $\alpha\in \R^n$ is a \textit{critical point} of $f$ over $V_{\R}(h)$ if there exists  multipliers $\lambda = (\lambda_1,\dots, \lambda_m)\in\R^m$, called a vector of \textit{Lagrange multipliers} associated to $\alpha$, such that
\begin{equation}\label{eq:nablaLagr}
    \nabla f(\alpha) + \lambda_1 \nabla h_1(\alpha)+ \dots + \lambda_m\nabla h_m(\alpha)=0,
\end{equation}
where $\nabla g$ is the gradient vector of the polynomial $g$.
Such a pair $(\alpha,\lambda)$ is a critical point of the \textit{Lagrangian function} that is in $n+m$ variables,
\begin{equation}\label{eq:Lxlambda}
    L(x,\lambda):=f(x)+\lambda^T h(x).
\end{equation}

We denote by $\nabla h=[\nabla h_1 \, \cdots \, \nabla h_m ]^T$ the Jacobian matrice of $h$.  Suppose the matrix $\nabla h$ is full rank at $\alpha$, i.e. $\nabla h_1(\alpha), \dots, \nabla h_m(\alpha)$ are linearly independent, for all $x$ in $V_{\C}(h)$. If $x$ is a local minimizer of $f$ over $V_{\R}(h)$, then
there exists unique $\lambda\in \R^p$ such that $(\alpha, \lambda)$ is a critical point of $L$, and the second-order necessary
condition holds \cite[Proposition 3.1.1]{bertsekas2nd}:
\begin{equation}\label{eq:nablaL0}
    u^T\Big( \nabla^2_{x} L(\alpha, \lambda)\Big)u \geq 0, \ \forall u\in \bigcap_{j=1}^{m}\nabla h_j(\alpha)^{\perp},
\end{equation}
where $\nabla^2_{x} L$ is the Hessian matrix of $L$ with respect to only the variable $x$, $a^{\perp}$ is the orthogonal complement to vector $a\in\R^n$. Conversely, if $(\alpha, \lambda)$ is a critical point of $L$ and the second-order sufficiency condition holds:
\begin{equation}\label{eq:nablaL}
    u^T\Big( \nabla^2_{x} L(x, \lambda)\Big)u > 0, \ \forall 0\neq u\in \bigcap_{j=1}^{m}\nabla h_j(\alpha)^{\perp},
\end{equation}
then $\alpha$ is a strict local minimizer \cite[Proposition 3.2.1]{bertsekas2nd}.
For convenience, the conditions \eqref{eq:nablaL0} and \eqref{eq:nablaL} will be written in the following way, respectively:
\[\nabla^2_{x} L(\alpha, \lambda) \succeq_{\nabla h(\alpha)^{\perp}} 0 \ \text{ and } \ \nabla^2_{x} L(\alpha, \lambda) \succ_{\nabla h(\alpha)^{\perp}} 0.\]

\section{Computing local minimizers of polynomials over $\R^n$}\label{sec:unconstraited}
Let $f$ be a poly\-nomial in $\R[x]$. In this section, we focus on computing local minimizers of $f$ over $\R^n$ under genericity conditions.


\subsection{A technical result}
We provide a technical result which will be used to prove the main result in Theorem \ref{thm:local}.
By adding an extra condition, the below lemma gives us a necessary and sufficient condition for strict local minimizers of $f$.

\begin{lemma}\label{lm:localiff}
    Suppose that $\det(\nabla^2 f)$ has no roots on $V_{\R}(\I_{\nabla}(f))$.
    Then, $\alpha$ in $\R^n$ is a local minimizer of $f$ if and only if $\alpha$ in $V_{\R}(\I_{\nabla}(f))$ and $\nabla^2 f(\alpha)$ is a positive definite matrix.
\end{lemma}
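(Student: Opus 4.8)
The statement is a strengthening of the standard second-order conditions under the nondegeneracy hypothesis that $\det(\nabla^2 f)$ vanishes nowhere on the real gradient variety. The "if" direction is immediate and requires no hypothesis: if $\alpha \in V_{\R}(\I_{\nabla}(f))$ and $\nabla^2 f(\alpha) \succ 0$, then $\alpha$ is a critical point with positive definite Hessian, so by the second-order sufficiency condition [Proposition 1.1.2, bertsekas2nd] it is a (strict) local minimizer. The content is in the "only if" direction.

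So suppose $\alpha$ is a local minimizer of $f$ over $\R^n$. By the first-order necessary condition [Proposition 1.1.1, bertsekas2nd], $\nabla f(\alpha) = 0$, i.e. $\alpha \in V_{\R}(\I_{\nabla}(f))$. By the second-order necessary condition, $\nabla^2 f(\alpha) \succeq 0$, so all eigenvalues of the symmetric matrix $\nabla^2 f(\alpha)$ are nonnegative. Now invoke the hypothesis: since $\alpha \in V_{\R}(\I_{\nabla}(f))$, we have $\det(\nabla^2 f(\alpha)) \neq 0$, hence $0$ is not an eigenvalue. Combining, all eigenvalues are strictly positive, which means $\nabla^2 f(\alpha) \succ 0$. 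This completes the "only if" direction; note it also shows every local minimizer here is automatically \emph{strict}.

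The proof is genuinely short, so there is no serious obstacle. The only point deserving care is bookkeeping about which optimality conditions are being applied: one must cite the necessary conditions (both first- and second-order) for the forward implication, and the sufficiency condition for the reverse implication, exactly as recalled in Subsection 2.2. It is also worth remarking explicitly that the hypothesis is what upgrades the necessary condition "$\nabla^2 f(\alpha)$ positive semi-definite" to "$\nabla^2 f(\alpha)$ positive definite," and that no genericity on $f$ beyond nonvanishing of $\det(\nabla^2 f)$ on $V_{\R}(\I_{\nabla}(f))$ is used. I would present the argument in two clearly separated paragraphs (one per implication), keeping the eigenvalue argument as the single substantive line.
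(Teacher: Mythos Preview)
Your proposal is correct and matches the paper's own proof essentially line for line: both dispatch the ``if'' direction by citing the second-order sufficiency condition, and handle the ``only if'' direction by combining the second-order necessary condition $\nabla^2 f(\alpha)\succeq 0$ with the hypothesis $\det(\nabla^2 f(\alpha))\neq 0$ to force strict positive definiteness. The only cosmetic difference is that you phrase the upgrade via eigenvalues, whereas the paper simply states that nonsingularity plus semidefiniteness gives definiteness.
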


\begin{proof}
Recall that the polynomial $f$ is twice continuously differentiable on $\R^n$. It is easy to see that the sufficient condition is the second-order sufficient condition for local optima\-lity \cite[Proposition  1.1.2]{bertsekas2nd}. We only need to prove the necessary condition.

Let $\alpha\in \R^n$ be a local minimizer of $f$ over $\R^n$. According to \cite[Proposition 1.1.1]{bertsekas2nd}, the second-order necessary condition holds, i.e. $\alpha$ belongs to $V_{\R}(\I_{\nabla}(f))$ and the Hessian matrix $\nabla^2 f(\alpha)$ is positive semi-definite.
From the assumption that $\det(\nabla^2 f)$ has no roots on $V_{\R}(\I_{\nabla}(f))$, the value of $\det(\nabla^2 f)$ at $\alpha$ must be nonzero. It follows that the matrix $\nabla^2 f(\alpha)$ is positive definite.
\end{proof}

\begin{example} We consider the polynomial $f(x_1,x_2)=x_1^2+(x_1x_2-1)^2$ in $\R[x_1,x_2]$. The real gradient variety has only one point, $V_{\R}(\I_{\nabla}(f))=\{(0,0)\}$. The determinant of the Hessian matrix of $f$ is given by $\det(\nabla^2 f)=-12 x_1^2 x_2^2  + 4 x_1^2  + 16 x_1 x_2 - 4$ that is negative at $(0,0)$. According to Lemma \ref{lm:localiff}, $f$ has no local minimizers.
\end{example}

\subsection{Existence of univariate representations}

The following theorem shows that, under genericity conditions, the set of all local minimizers of the polynomial $f$ over $\R^n$ is described by a univariate problem.

\begin{theorem}\label{thm:local} Suppose that $\I_{\nabla}(f)$ is zero-dimensional and $\det(\nabla^2 f)$ has no roots on $V_{\R}\left(\I_{\nabla}(f)\right)$. Then, there exists  an invertible matrix $A$ in $\Mat(n\times n,\R)$, a polynomial $w$ in $\R[t]$, a vector $v$ in $\R[t]^{n}$, and a matrix $H$ in $\Mat(n\times n,\R[t])$ such that
    \begin{equation}\label{eq:local}
        \loc(f,\R^n) = \big\{A^{-1}\big(v(t)\big): w(t)=0, t\in\R,  H(t)\succ 0 \big\}.
    \end{equation}
\end{theorem}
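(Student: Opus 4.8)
The plan is to combine the characterization of local minimizers from Lemma~\ref{lm:localiff} with the Shape Lemma (Lemma~\ref{lm:shape0}) applied after a generic linear change of coordinates. First I would pass to the radical: since $\I_\nabla(f)$ is zero-dimensional, $\rad(\I_\nabla(f))$ is zero-dimensional and radical, and by Lemma~\ref{lm:radC} it has the same complex and real varieties, so $\loc(f,\R^n)$ is unchanged. The set $V_\C(\rad(\I_\nabla(f)))$ is finite, say with $N$ points; I would choose an invertible $A\in\Mat(n\times n,\R)$ so that, after the substitution $x=A^{-1}y$, the transformed points of the variety have pairwise distinct first coordinates (a generic $A$ works, and rationally one can search over a finite list of candidates). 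Writing $\tilde f(y)=f(A^{-1}y)$, its gradient ideal $\I_\nabla(\tilde f)$ is the image of $\I_\nabla(f)$ under the coordinate change, still zero-dimensional, and its radical has a complex variety whose points have distinct $y_1$-coordinates, so by Lemma~\ref{lm:shape0} it is in shape position: its reduced Gr\"obner basis for the lex order $y_1<\cdots<y_n$ is $\{w,\,y_2-v_2(y_1),\dots,y_n-v_n(y_1)\}$ with $\deg w=N$ and $\deg v_j\le N-1$. Set $v(t)=(t,v_2(t),\dots,v_n(t))^T$.

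Next I would translate the positive-definiteness condition. For a point $\alpha\in\R^n$, being a local minimizer of $f$ is, by Lemma~\ref{lm:localiff}, equivalent to $\alpha\in V_\R(\I_\nabla(f))$ together with $\nabla^2 f(\alpha)\succ 0$; under the coordinate change $\alpha=A^{-1}\beta$ this becomes $\beta\in V_\R(\I_\nabla(\tilde f))=V_\R(\rad\I_\nabla(\tilde f))$ together with $\nabla^2\tilde f(\beta)=(A^{-1})^T\nabla^2 f(A^{-1}\beta)A^{-1}\succ 0$; note that $(A^{-1})^T M A^{-1}\succ 0$ iff $M\succ 0$, so the two positive-definiteness conditions are genuinely equivalent and no root of $\det\nabla^2\tilde f$ lies on $V_\R(\I_\nabla(\tilde f))$ either. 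Now on the real variety of the shape-position ideal, every point $\beta$ satisfies $\beta=v(t)$ for $t=\beta_1$ a real root of $w$, and conversely every real root $t$ of $w$ gives exactly one such point (since $w$ is squarefree, as the ideal is radical, and shape position forces $\beta_j=v_j(t)$). Therefore I would define $H(t):=\nabla^2\tilde f\bigl(v(t)\bigr)$, an $n\times n$ matrix of polynomials in $\R[t]$, obtained by substituting the components of $v(t)$ into the Hessian of $\tilde f$. Then $\loc(f,\R^n)=\{A^{-1}(v(t)): t\in\R,\ w(t)=0,\ H(t)\succ 0\}$: membership of $A^{-1}(v(t))$ in the real gradient variety is exactly $w(t)=0$ (with $\beta=v(t)$ automatically satisfying the remaining generators), and positive definiteness of the Hessian at that point is exactly $H(t)\succ 0$. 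This gives \eqref{eq:local}.

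The technical heart — and the step I expect to require the most care — is justifying the generic choice of $A$: I need that for all but finitely many $A$ (or: for $A$ in a nonempty Zariski-open subset of $\Mat(n\times n,\R)$, which is dense) the $N$ points of the transformed variety have distinct first coordinates. The clean way is to observe that two distinct points $p\ne p'$ in $\C^n$ collide in the first coordinate of $Ap$ versus $Ap'$ precisely when the first row $a_1^T$ of $A$ lies in the hyperplane $\{a: a^T(p-p')=0\}$; ranging over the finitely many pairs from $V_\C(\rad\I_\nabla(f))$ gives a finite union of proper hyperplanes, whose complement is nonempty and contains rational points, so a suitable $A$ with rational entries exists and can in principle be found by testing candidates. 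One must also remark that $A$ need only be chosen so that the \emph{transformed} points of the finite set $V_\C$ have distinct first coordinates — this is a finite condition depending only on the (finite) variety, not on $\tilde f$'s Gr\"obner basis — and that after fixing such $A$ the Shape Lemma applies verbatim. The remaining points (radical has the same variety; the Hessian-congruence identity; squarefreeness of $w$ from radicality; the bijection between real roots of $w$ and real points of the variety) are all routine given the results already recalled in Section~\ref{sec:pre}.
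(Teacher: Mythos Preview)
Your proof is correct and follows essentially the same route as the paper: pass to the radical, apply a linear change of variables so that the Shape Lemma yields a parametrization of $V_{\C}(\I_\nabla(f))$, invoke Lemma~\ref{lm:localiff}, and substitute the parametrization into the Hessian. The only cosmetic differences are that the paper selects $A$ from Rouillier's explicit one-parameter family \cite{rouillier1999} rather than by your hyperplane-avoidance argument, and it defines $H(t)$ as $\nabla^2 f\bigl(A^{-1}v(t)\bigr)$ directly rather than as $\nabla^2\tilde f(v(t))$; as you correctly observe, these differ by congruence with $A^{-1}$ and hence are positive definite for the same values of $t$.
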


Such $4$-tuple  $\left(A,w,v,H\right)$ is a \textit{univariate representation} of the local minimizers of $f$ over $\R^n$.

\begin{proof}We first prove existence of a univariate representation of the complex critical set of $f$:

{\sc Claim.} \textit{There exist polynomials $w,v_2,\dots,v_n \in \R[t]$ and an invertible matrix $A$ in $\Mat(n\times n,\R)$  such that}
\begin{equation}\label{eq:crit}
    V_{\C}(\I_{\nabla}(f)) = \big\{A^{-1}\big(t,v_2(t),\dots,v_n(t)\big): w(t)=0, t\in\C \big\}.
\end{equation}

We consider the following two cases. First, the points in $V_{\C}(\I_{\nabla}(f))$ have distinct
$x_1$-coordinates. It follows from Lemma \ref{lm:radC} that $\rad(\I_{\nabla}(f))$ is zero-dimensional and radical, and the points in $V_{\C}(\rad(\I_{\nabla}(f)))=V_{\C}(\I_{\nabla}(f))$ have distinct
$x_1$-coordinates. With the lexicographical monomial ordering $x_1< x_2< \cdots < x_n$ in $\R[x]$, according to the Shape Lemma (Lemma \ref{lm:shape0}),  $\rad(\I_{\nabla}(f))$ is in shape position, i.e. there exist $w, v_2,\dots, v_n$ in $\R[x_1]$ of degrees $\deg v_i<\deg w=\# V_{\C}(\I_{\nabla}(f))$,  for $i=2,\dots,n$, such that
\[\rad(\I_{\nabla}(f))=\left\langle
w(x_1),x_2-v_2(x_1),
\dots,x_n-v_n(x_1)\right\rangle.\]
Thus, $V_{\C}(\I_{\nabla}(f))$ can be parameterized as follows:
\begin{equation}\label{eq:shape}
    V_{\C}(\I_{\nabla}(f)) = \big\{\big(t,v_2(t),\dots,v_n(t)\big): w(t)=0 \big\}.
\end{equation}
In this case, one defines $A=I_n$ -- the identical matrix. Hence, \eqref{eq:shape} becomes \eqref{eq:crit}.

Second, there are two points in $V_{\C}(\I_{\nabla}(f))$ having the same
$x_1$-coordinates, we handle this case by changing variables as in the proof of \cite[Theorem 3.1]{magron2023} as follows:

Denoting by $\delta$ the cardinality of $V_{\C}(\I_{\nabla}(f))$, according to \cite[Lemma 2.1]{rouillier1999},
there exists a number
$j$ belonging the set $\{1,\dots,(n-1)\delta(\delta-1)/2\}$ such that the linear function
$u(x)=x_1+jx_2+\dots+j^{n-1}x_n$ separates $V_{\C}(\I_{\nabla}(f))$, i.e., $u(\alpha)\neq
u(\beta)$ for any distinct points $\alpha,\beta$ in $V_{\C}(\I_{\nabla}(f))$. We consider the
change of variables $y=Ax$, where
\begin{equation}\label{T}
    A=\begin{bmatrix}
        1 & j & j^2 &\cdots & j^{n-1} \\
        0 & 1 & 0 & \cdots  & 0 \\
        0& 0 & 1 & \cdots & 0\\
        \vdots & \vdots & \vdots & \ddots & \vdots\\
        0& 0 & 0 & \cdots & 1
    \end{bmatrix}.
\end{equation}
Clearly, $A$ is an invertible matrix. Then, we obtain a new polynomial $g$ in variables $y_1, y_2, \dots, y_n$, says
$g(y)=f(A^{-1}y)$. Thanks to the chain rule $\nabla g= \nabla f \circ
A^{-1}$, $V_{\C}(\I_{\nabla}(g))$ is the image of $V_{\C}(\I_{\nabla}(f))$ under the linear map associated to matrix $A$.
Conversely, one has
\[V_{\C}(\I_{\nabla}(f)) = \Big\{\alpha\in\C^n: \alpha=A^{-1}\beta, \, \beta\in V_{\C}(\I_{\nabla}(g))\Big\}.\]
Hence, this set is finite. Since
$y_1=u(x)$ separates
$V_{\C}(\I_{\nabla}(g))$, the
distinct points in $V_{\C}(\I_{\nabla}(g))$ have distinct
$y_1$-coordinates. By applying the conclusion in the first case for $g$, we obtain the conclusion with matrix $A$ given in \eqref{T}. The claim is proved.

Since $\det(\nabla^2 f)$ has no roots on $V_{\R}\left(\I_{\nabla}(f)\right)$, according to Lemma \ref{lm:localiff}, the following equality holds
\begin{equation}\label{eq:localf}
    \loc(f,\R^n) = \big\{\alpha\in V_{\R}\left(\I_{\nabla}(f)\right): \nabla^2 f (\alpha)\succ 0 \big\}.
\end{equation}

\if
Thanks to the chain rule $\nabla g= \nabla f \circ
A^{-1}$, one has $\nabla^2 g= \nabla^2 f \circ
A^{-1}$. Thus
\[\det(\nabla^2 g)= \det(\nabla^2 f) \times \det(A^{-1}) = \det(\nabla^2 f)\]
since $\det(A)=1$.
\fi

We denote by \begin{equation}\label{eq:vt1}
    v(t):=\big(t,v_2(t),\dots,v_n(t)\big).
\end{equation}
Thanks to above claim, one has
\begin{equation}\label{eq:Vf}
    V_{\R}(\I_{\nabla}(f)) = \big\{A^{-1}\big(v(t)\big): w(t)=0, t\in\R \big\}.
\end{equation}
Matrix $H(t)$ is obtained by replacing $y$ by $v(t)$ in $\nabla^2 g$, says $ H(t) = \nabla^2 g  \big(v(t)\big)$. As the change of variables, we have
\begin{equation}\label{eq:Ht}
    H(t) = \nabla^2 f (A^{-1}\big(v(t)\big)).
\end{equation}

Combining the results in \eqref{eq:localf}, \eqref{eq:Vf}, and \eqref{eq:Ht}, we obtain \eqref{eq:local}.
\end{proof}

\begin{remark} We discuss genericity of the two assumptions in Theorem \ref{thm:local}. Recall that a property is said to be \textit{generic} on $\R^m$ if it holds on a subset $\Oo \subset \R^m$ such that its complement $\R^m\setminus\Oo$ has Lebesgue measure zero.
    As mentioned in \cite[Proposition 1]{nie2006}, the first assumption of Theorem \ref{thm:local} related to being zero-dimensional of $\I_{\nabla}(f)$ is generic in $\R[x]_d$, the linear space of all polynomials in $\R[x]$ of degree at most $d$. Moreover, the property of being non-zero on the critical set $V_{\R}\left(\I_{\nabla}(f)\right)$ of $\det(\nabla^2 f)$ is also generic in the linear space
    \[\mathcal{A}_\mathcal{N}=\{f\in\R[x]: \mathcal{N}(f) \subseteq \mathcal{N}\},\]
    where $\mathcal{N}$ is a given Newton polyhedron at infinity and $\mathcal{N}(f)$ is the Newton polyhedron at infinity of $f$ \cite[Theorem 5.1]{pham2016genericity}.

\end{remark}
\smallskip

We illustrate Theorem \ref{thm:local} by considering the following simple example.

\begin{example}\label{ex:2} We consider the polynomial $f$ in two variables with real coefficients $ f(x_1,x_2)= x_1^2+x_2^4-2x_2^2$.
    The gradient ideal $\I_{\nabla}(f)$ is
    \begin{equation}\label{Igradf}
        \I_{\nabla}(f)=\left\langle 2x_1, 4x_2^3 - 4x_2
        \right\rangle
    \end{equation}
    that is zero-dimensional but not in shape position w.r.t the lexicographical monomial ordering $x_1<x_2$. There are points in \begin{center}
        $V_{\C}(\I_{\nabla}(f))=\left\lbrace (0,-1),(0,0),(0,1)\right\rbrace$
    \end{center} have the same $x_1$-coordinates. We choose $j=1$, then the linear function
    $u=x_1+x_2$ separates $V_{\C}(\I_{\nabla}(f))$. Consider the
    change of variables $y=Ax$, where
    \begin{equation*}\label{AA}
        A=\begin{bmatrix}
            1 & 1 \\
            0 & 1
        \end{bmatrix}, \text{ and }
        A^{-1}=\begin{bmatrix}
            1 & -1 \\
            0 & 1
        \end{bmatrix},
    \end{equation*}
    we obtain the polynomial $g$ in variables $y_1, y_2$,
    \[g(y)=f(A^{-1}y)=y_1^2+y_2^4-y_2^2-2y_1y_2.\]
    By an easy computation, we obtain $\I_{\nabla}(g) = \left\langle y_1^3-y_1,y_2-y_1 \right\rangle $, the ideal is in shape position w.r.t. the lexicographical monomial ordering $y_1<y_2$.  One has $w(t)= t^3-t$ and $v_{2}(t)=t$, thus $A^{-1}(v(t))=(0,t)$, where $v(t)=(t,v_2(t))$. The set of real roots of $w$ is $\{-1,0,1\}$.

    The Hessian matrix of $f$ with respect to variables $(x_1,x_2)$ is given by
    \[\nabla^2 f=
    \begin{bmatrix}
        2  & 0 \\
        0 & 12x_2^2-4
    \end{bmatrix}, \ \text{ hence } \ H(t)= \nabla^2 f (0,t)= \begin{bmatrix}
        2  & 0 \\
        0 & 12t^2-4
    \end{bmatrix}.\]
    The determinant of  $\nabla^2 f$ is $8(3x_2^2-1)$.
    This determinant is nonzero on the real gradient variety $V_{\R}(\I_{\nabla}(f))$. Moreover, $H(t)$ is positive definite at two points $t=-1$ and $t=1$. Thanks to \eqref{eq:local}, we conclude that the set of all local minimizes of $f$ over $\R^2$ is
    \[\loc(f) = \big\{\big(0,t\big): w(t)=0, t\in\R,  H(t)\succ 0 \big\}=\{(0,-1),(0,1)\}.\]
\end{example}

If we are only interested in global minimizers and the minimum value, the
assumptions in Theorem \ref{thm:local} can be lightened. In particular, the assumption $\det(\nabla^2 f)$ being non-zero on $V_{\R}\left(\I_{\nabla}(f)\right)$ can be dropped. If $\I_{\nabla}(f)$ is zero-dimensional and the infimum of $f$ over $\R^n$, denoted by $f_{\min}$, is attained, we show in the following theorem that this value can be computed though a constrained univariate optimization problem.

\begin{theorem}\label{thm:global}
    Suppose that $\I_{\nabla}(f)$ is zero-dimensional and the infimum of $f$ over $\R^n$ is attained. Then, there exist two univariate polynomials $r, w\in\R[t]$ of degrees $\deg r < \deg w = \#V_{\C}(\I_{\nabla}(f))$ such that
    \begin{equation}\label{eq:rmin1}
        f_{\min}=\min \{r(t):w(t)=0, t\in\R\}.
    \end{equation}
\end{theorem}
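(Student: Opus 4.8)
The plan is to reduce the optimization of $f$ over $\R^n$ to a \emph{finite} optimization over the real gradient variety, and then to univariatize the latter by exactly the Shape Lemma construction used in the proof of Theorem~\ref{thm:local}. First I would use the hypothesis that the infimum is attained: there is a global minimizer $\alpha^\ast\in\R^n$, which is in particular a local minimizer, so by the first-order necessary condition \cite[Proposition 1.1.1]{bertsekas2nd} we have $\alpha^\ast\in V_{\R}(\I_{\nabla}(f))$. Since moreover $f(\alpha)\ge f_{\min}$ for every $\alpha\in\R^n$, and $V_{\R}(\I_{\nabla}(f))$ is nonempty (it contains $\alpha^\ast$) and finite (zero-dimensionality of $\I_{\nabla}(f)$), we obtain
\[
f_{\min}=\min\{f(\alpha):\alpha\in V_{\R}(\I_{\nabla}(f))\}.
\]

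Next I would invoke the Claim established inside the proof of Theorem~\ref{thm:local}: there is an invertible $A\in\Mat(n\times n,\R)$ and polynomials $w,v_2,\dots,v_n\in\R[t]$ with $\deg v_i<\deg w=\#V_{\C}(\I_{\nabla}(f))$ such that, writing $g(y)=f(A^{-1}y)$ and $v(t)=(t,v_2(t),\dots,v_n(t))$, one has $V_{\C}(\I_{\nabla}(g))=\{v(t):w(t)=0\}$ and $V_{\R}(\I_{\nabla}(f))=\{A^{-1}(v(t)):w(t)=0,\ t\in\R\}$. Because $w$ is squarefree of degree $\#V_{\C}(\I_{\nabla}(f))$ and $y_1$ separates $V_{\C}(\I_{\nabla}(g))$, the map $t\mapsto A^{-1}(v(t))$ restricts to a bijection from the real roots of $w$ onto $V_{\R}(\I_{\nabla}(f))$ (given $\alpha\in V_{\R}(\I_{\nabla}(f))$, its preimage root is $t=(A\alpha)_1\in\R$; conversely a real root $t$ yields the real point $A^{-1}(v(t))$). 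I would then set $\widetilde r(t):=g(v(t))\in\R[t]$ and let $r\in\R[t]$ be the remainder of $\widetilde r$ on division by $w$, so that $\deg r<\deg w$ and $r(t)=\widetilde r(t)$ at every root $t$ of $w$. For $\alpha=A^{-1}(v(t))\in V_{\R}(\I_{\nabla}(f))$ with $w(t)=0$ we have $f(\alpha)=g(v(t))=\widetilde r(t)=r(t)$; combining this with the displayed formula for $f_{\min}$ and the bijection yields $f_{\min}=\min\{r(t):w(t)=0,\ t\in\R\}$, with the required degree bound.

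I do not expect a serious obstacle here, as this statement is essentially a Hessian-free specialization of Theorem~\ref{thm:local}. The two points that genuinely need care are: (i) the passage from ``infimum over $\R^n$'' to ``minimum over the finite critical set'', which really uses the standing assumption that the infimum is attained---otherwise $f_{\min}$ need not be a critical value, the extreme case being an $f$ with no critical points at all; and (ii) recording that reducing $\widetilde r$ modulo $w$ changes $\widetilde r$ only off the zero set of $w$, so the bound $\deg r<\deg w$ costs nothing on the relevant values. If one prefers not to re-derive the change of variables, the argument can equivalently be phrased by applying the Shape Lemma directly when the critical points already have distinct $x_1$-coordinates, and precomposing with the linear map $A$ of \eqref{T} otherwise.
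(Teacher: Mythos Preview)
Your argument is correct and follows essentially the same route as the paper: reduce to the finite real gradient variety via the attained-infimum hypothesis and the first-order condition, parametrize that variety by the Shape Lemma claim from the proof of Theorem~\ref{thm:local}, set $p(t)=f(A^{-1}(v(t)))$ (your $\widetilde r$), and pass to the remainder $r$ modulo $w$. Your explicit verification that real roots of $w$ biject with $V_{\R}(\I_{\nabla}(f))$ is a nice clarification the paper leaves implicit.
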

\begin{proof}
Since $\I_{\nabla}(f)$ is zero-dimensional, according to the claim in the proof of Theorem \ref{thm:local}, there exist $w,v_2,\dots,v_n \in \R[t]$, denote by $v(t)=(t,v_2,\dots,v_n)$, and an invertible matrix $A$ in $\Mat(n\times n,\R)$  such that \eqref{eq:crit} holds, then \eqref{eq:Vf} so is. We define
\begin{equation}\label{eq:pt}
    p(t):=f(A^{-1}\big(v(t)\big)).
\end{equation}
Clearly, $p$ is a polynomial in variable $t$ with real coefficients.
Let $r$ be the remainder of the division of $p$ by $w$ with $\deg r < \deg w$.

We now demonstrate the following relationship:
\begin{equation}\label{eq:minmin}
    f_{\min}=\min \{p(t):w(t)=0, t\in\R\}=\min \{r(t):w(t)=0,t\in\R\}.
\end{equation}
The first equality of \eqref{eq:minmin}
is obtained from the following relations:
$$\begin{aligned}
    \min_{x} \{f(x):x\in \R^n\} & = \min_{x} \left\lbrace f(x):x\in V_{\R}(\I_{\nabla}(f))\right\rbrace  \\
    &= \min_x \left\lbrace f(x):x\in V_{\R}(\rad(\I_{\nabla}(f)))\right\rbrace \\
    &=\min_{t} \left\lbrace f(A^{-1}\big((v(t)\big)):w(t)=0,t\in\R\right\rbrace \\
    &= \min_t \left\lbrace p(t):w(t)=0,t\in\R\right\rbrace .
\end{aligned}$$
We give here some explanations for above equalities. The first equality holds because every global minimizer is in the critical set, thanks to the assumption. The second  equality is obtained relying on Lemma \ref{lm:radC}. The third one is implied from \eqref{eq:Vf}. The last equality is obtained from the definition of $p$.

The last equality in \eqref{eq:minmin} is implied from the fact that $p=gw+r$ and that the values of $p$ and $r$ are coincident on the set $\{t\in\R:w(t)=0\}$.
\end{proof}

\begin{remark}\label{rmk:degree1}  It is worth emphasizing here the meaning of the equality in \eqref{eq:rmin1}. Suppose that $f$ has finitely many complex critical points and $f$ reaches its infimum. From Bezout's theorem
    \cite[Chapter 8, \S 7]{cox2013}, $\#V_{\C}(\I_{\nabla}(f))$
    is at most $(d-1)^n$, where $d$ is the degree of $f$. Since  $\deg v_i<\deg w$ for $i=2,\dots, n$, the degree of $p$ given in \eqref{eq:pt} does not exceed $d(d-1)^{n}$, this number is significantly larger than $(d-1)^n-1$ that is the possibly highest degree of $r$. It turns out from \eqref{eq:rmin1} that
    computing the minimum of the polynomial $f$ in $n$ variables of degree $d$ over $\R^n$ is equivalent to computing the minimum of a univariate polynomial $r$ of degree at most $(d-1)^n-1$ over the real roots of another univariate polynomial $w$, with $\deg r < \deg w \leq (d-1)^n$. Thanks to the last equality in \eqref{eq:minmin}, the complexity of computing $\min \{r(t):w(t)=0\}$ is less than $d$ times compared to $\min \{p(t):w(t)=0\}$.
\end{remark}

\begin{example}\label{ex:3} We consider polynomial $f$ in Example \ref{ex:3}. This polynomial is coercive, thus its infimum value is reached. From the results in Example \ref{ex:3}, one has $w(t)= t^3-t$, $v_{2}(t)=t$, and $A^{-1}(t,v_2(t))=(0,t)$. Thus the polynomial $p$ and remainder $r$ of the division of $p$ by $w$ are given respectively by $p(t)=f(0,t)=t^4-2t^2$ and $r(t)=-t^2$.
    According to the formula \eqref{eq:rmin1} in Theorem \ref{thm:global}, one has
    \[f_{\min}=\min \{-t^2:t^3-t=0\} =-1,\]
    hence the set of global minimizers is $\{(0,-1),(0,1)\}$ that is coincident with the set of local minimizers $\loc(f,\R^2)$.
\end{example}

\subsection{Algorithms computing local minimizers and the minimum}\label{sub:compRn}
Based on the proof of Theorem \ref{thm:local}, we design an algorithm named $\grulom$ to compute all LOcal Minimizers of a polynomial $f$ over $\R^n$ (Unconstrained) though its GRadient ideal, and another one named $\grulomplus$, based on the proof of Theorem \ref{thm:global}, to compute the minimum if it exists.

\subsubsection{Algorithm $\grulom$}
The input of Algorithm $\grulom$ is a polynomial $f$ in $\R[x]$ such that the assumptions in Theorem \ref{thm:local} are satisfied.
The output of  $\grulom$ is the set of local minimizers of $f$ over $\R^n$.

\textit{Description}. At the beginning, we set $j=0$ and $\shape=$ false, the variable $\shape$ is boolean.
The while loop at Line~2 will stop if $\shape$ is true. Within the while loop, at Line 3, with $j$ given, we define matrix $A$  depending on $j$ as in \eqref{eq:A} and change the variables and obtain $g(y)= f(A^{-1}y)$.

At Line~4, we compute the radical of the ideal $\I_{\nabla}(f)$ and its reduced Gr\"{o}bner basis $G$ in $\R[y_1,\dots,y_n]$ with respect to the ordering with $y_1< y_2< \cdots < y_n$. At Line 5, we set the value of $\shape$ as true if
$\rad(\I_{\nabla}(f))$ is in shape position
otherwise, we increase the value of $j$ by setting $j:=j+1$.

By substituting $y=v(t)$ in $\nabla^2 g(y)$, we obtain the univariate polynomial matrix $H(t)$ at Line 6.
The notation $H(t):=\tilde{H}(t)\ \text{mod} \ w(t)$ means $H_{ij}(t):=\tilde{H}_{ij}(t)\ \text{mod} \ w$ for all $i,j=1,\dots,n$.

\begin{algorithm}
    \caption{$\grulom$ computing local minimizers of a polynomial over $\R^n$}\label{alg:local}

    \smallskip

    \textbf{Input:} $f\in \R[x]$ such that  $\I_{\nabla}(f)$ is zero-dimensional and $\det(\nabla^2 f)$ is non-zero on $V_{\R}\left(\I_{\nabla}(f)\right)$
    \smallskip

    \textbf{Output:} $\loc(f,\R^n)$ -- the set of all local minimizers of $f$ over $\R^n$

    \smallskip
    \begin{enumerate}
        \item [\rm 1:] Set $j:=0$ and $\shape:=$ false
        \smallskip
        \item [\rm 2:] While $\shape=$ false do
        \smallskip
        \begin{itemize}
            \item [\rm 3:] Change the variables $x=A^{-1}y$ in $f(x)$ and obtain $g(y)$, where
            \begin{equation}\label{eq:A}
                A:=\begin{bmatrix}
                    1 & j & j^2 &\cdots & j^{n-1} \\
                    0 & 1 & 0 & \cdots  & 0 \\
                    0& 0 & 1 & \cdots & 0\\
                    \vdots & \vdots & \vdots & \ddots & \vdots\\
                    0& 0 & 0 & \cdots & 1
                \end{bmatrix}
            \end{equation}
            \item [\rm 4:] Compute $\rad(\I_{\nabla}(g))$ in $\R[y_1,\dots,y_n]$ and its reduced Gr\"{o}bner basis $G$  with respect to the lexicographical monomial ordering $y_1<  \cdots < y_n$
            \smallskip
            \item [\rm 5:] If the basis $G$ has the following form \begin{equation}\label{eq:G}
                G=[w,y_2-v_2,\dots,y_n-v_n],
            \end{equation}
            where $w,v_2,\dots,v_n$ are in $\R[y_1]$, i.e. $\rad(\I_{\nabla}(g))$
            is in shape position, then set $\shape:=$ true, else
            set $j:= j+1$. Denote $v(t):=(t,v_2(t),\dots,v_n(t))$.
            \smallskip
        \end{itemize}

        \item [\rm 6:] Compute $\nabla^2 g(y)$ and obtain $\tilde{H}(t)$ by substituting $y=v(t)$ in $\nabla^2 g(y)$, where $v_i$'s are from Line 5, and then compute $H(t):=\tilde{H}(t) \ \text{mod} \ w(t)$
        \smallskip

        \item [\rm 7:] Return the set
\[\loc(f,\R^n) = \Big\{A^{-1}\big(v(t)\big): w(t)=0, t\in\R, H(t)\succ 0 \Big\}\]
    \end{enumerate}
\end{algorithm}

The correctness of Algorithm $\grulom$ is stated and proven in the following proposition.

\begin{proposition}\label{thm:corectness} Let $f$ be in $\R[x]$ such that the conditions in Theorem~\ref{thm:local} hold. On input $f$, Algorithm $\grulom$
    terminates and returns all local minimizers of $f$.
\end{proposition}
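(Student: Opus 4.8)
The plan is to verify the two parts of the claim separately: that Algorithm~$\grulom$ terminates, and that on termination the set returned at Line~7 is exactly $\loc(f,\R^n)$. Both are a repackaging of ingredients already present in the proof of Theorem~\ref{thm:local} and in Lemma~\ref{lm:localiff}, so I expect no essentially new difficulty; the work is in the termination count and in keeping track of the change of variables.

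\emph{Termination.} The only step that can loop forever is the while loop at Line~2, which increments $j$ whenever the reduced Gr\"obner basis $G$ of Line~4 does not have the form~\eqref{eq:G}. I would set $\delta = \#V_{\C}(\I_{\nabla}(f))$, finite by hypothesis, and use \cite[Lemma~2.1]{rouillier1999} to obtain an integer $j_0 \le (n-1)\delta(\delta-1)/2$ for which $u(x) = x_1 + j_0 x_2 + \dots + j_0^{\,n-1} x_n$ separates $V_{\C}(\I_{\nabla}(f))$. For $j = j_0$, with $A$ as in~\eqref{eq:A} and $g(y) = f(A^{-1}y)$, the argument in the proof of Theorem~\ref{thm:local} shows $V_{\C}(\I_{\nabla}(g)) = A\cdot V_{\C}(\I_{\nabla}(f))$; this set is finite and, since $y_1 = u(x)$, its points have pairwise distinct $y_1$-coordinates. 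By Lemma~\ref{lm:radC}, $\rad(\I_{\nabla}(g))$ is radical, zero-dimensional and has the same complex variety, so the Shape Lemma (Lemma~\ref{lm:shape0}) applies; as ``in shape position'' means by definition that the reduced Gr\"obner basis for $y_1 < \dots < y_n$ has the form~\eqref{eq:G}, Line~5 sets $\shape:=$ true no later than $j = j_0$ and the loop exits. The remaining operations — forming $\I_{\nabla}$, computing the radical of a zero-dimensional ideal and its reduced Gr\"obner basis, substituting $y = v(t)$ into $\nabla^2 g$, and reducing entrywise modulo $w$ — are all finite symbolic computations, so the algorithm halts.

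\emph{Correctness.} Suppose the loop exits with $g(y) = f(A^{-1}y)$, $w \in \R[t]$, $v(t) = (t, v_2(t), \dots, v_n(t))$ and $\rad(\I_{\nabla}(g)) = \langle w,\, y_2 - v_2, \dots, y_n - v_n\rangle$. Solving this triangular system gives $V_{\C}(\rad(\I_{\nabla}(g))) = \{v(t) : w(t) = 0\}$; by Lemma~\ref{lm:radC} this equals $V_{\C}(\I_{\nabla}(g))$, which equals $A\cdot V_{\C}(\I_{\nabla}(f))$ by the chain rule (as in the proof of Theorem~\ref{thm:local}), so $V_{\C}(\I_{\nabla}(f)) = \{A^{-1}(v(t)) : w(t) = 0,\ t \in \C\}$. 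Since $A$ is real and the first entry of $v(t)$ is $t$, restricting to real points gives $V_{\R}(\I_{\nabla}(f)) = \{A^{-1}(v(t)) : w(t) = 0,\ t \in \R\}$, i.e.~\eqref{eq:Vf}. Because $\det(\nabla^2 f)$ has no root on $V_{\R}(\I_{\nabla}(f))$, Lemma~\ref{lm:localiff} gives $\loc(f,\R^n) = \{\alpha \in V_{\R}(\I_{\nabla}(f)) : \nabla^2 f(\alpha) \succ 0\}$. It remains to match the definiteness tests: the chain rule gives the congruence $\nabla^2 g(y) = (A^{-1})^T (\nabla^2 f)(A^{-1}y)(A^{-1})$, which preserves positive definiteness since $A^{-1}$ is invertible, so $\nabla^2 g(v(t)) \succ 0 \iff \nabla^2 f(A^{-1}(v(t))) \succ 0$; and since $H(t) = \tilde{H}(t) \bmod w$ agrees entrywise with $\tilde{H}(t) = \nabla^2 g(v(t))$ whenever $w(t) = 0$, we get $H(t) \succ 0 \iff \nabla^2 f(A^{-1}(v(t))) \succ 0$ at every real root $t$ of $w$. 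Combining the three descriptions shows $\loc(f,\R^n)$ equals the set returned at Line~7. Finally $w$ has finitely many real roots, each of which can be isolated, and positive definiteness of the constant matrix $H(t)$ can be decided at each of them, so the algorithm indeed returns all local minimizers of $f$.

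\emph{Main obstacle.} I expect the termination argument to be the only delicate point: exhibiting a separating linear form among those indexed by an integer $j$ via \cite[Lemma~2.1]{rouillier1999}, and then observing that ``shape position'' is by definition exactly the test~\eqref{eq:G} performed at Line~5. The remaining bookkeeping — that $y = Ax$ induces a congruence, not a similarity, on the Hessian, and that reducing $H$ modulo $w$ is harmless because $H$ is only ever evaluated at roots of $w$ — is routine, but I would spell it out so that the equivalence $H(t) \succ 0 \iff \nabla^2 f(A^{-1}(v(t))) \succ 0$ is justified rather than asserted.
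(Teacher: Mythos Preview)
Your proposal is correct and follows essentially the same approach as the paper: termination via \cite[Lemma~2.1]{rouillier1999} bounding the index of a separating linear form, then correctness by reassembling the ingredients of Theorem~\ref{thm:local} (Shape Lemma, Lemma~\ref{lm:localiff}, and the observation that $H$ and $\tilde H$ agree at roots of $w$). Your treatment of the Hessian under the change of variables is in fact more careful than the paper's, which writes $H(t)=\nabla^2 f(A^{-1}(v(t)))$ rather than the congruence $(A^{-1})^T(\nabla^2 f)(A^{-1}v(t))(A^{-1})$; your version makes explicit why positive definiteness is preserved.
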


\begin{proof}
Thanks to \cite[Lemma 2.1]{rouillier1999}, with the matrix $A$ defined by \eqref{eq:A}, the while loop will be certainly ended within $(n-1)\delta(\delta-1)/2$ iterations, where $\delta$ is the cardinality of $V_{\C}(\I_{\nabla}(f))$.

When the while loop ends, from the assumptions, the ideal $\rad(\I_{\nabla}(g))$ is
zero-dimensional and radical. Its reduced Gr\"{o}bner basis $G$ has the form \eqref{eq:G}, then $\rad(\I_{\nabla}(f))$ is in shape position
with \[\rad(\I_{\nabla}(f))=\left\langle w,y_2-v_2,\dots,y_n-v_n\right\rangle.\]

By the definition, the values of $H_{ij}(t)$  and $\tilde{H}_{ij}(t)$ are the same on the roots of $w(t)$. It follows that $H(t)\succ 0$ is equivalent to $\tilde{H}(t)\succ 0$ for $t$ satisfying $w(t)=0$.

According to Theorem \ref{thm:local}, the set of all local minimizers of $f$ over $\R^n$ is the following set:  $\{A^{-1}(t,v_2(t),\dots,v_n(t)): w(t)=0, t\in\R, H(t)\succ 0\}$.
\end{proof}

\begin{remark} In \cite{safey2008computing,greuet2014}, the authors design \textit{exact} algorithms computing a univ\-ariate polynomial
    vanishing at the global infimum and an isolating interval for the value. Our algorithm $\grulom$ provides a \textit{univariate representation} $\left(A,w,v, H\right)$ for the local minimizers of $f$ over $\R^n$ such that the set of local minimizers of $f$ is described as in \eqref{eq:local}.
    Furthermore, by the definition at Line 6, one has $\deg H_{ij}(t) <\deg w$. Hence, we claim that the degrees of all polynomials $v_i,H_{ij}$ do not exceed $(d-1)^n-1$.
\end{remark}

\begin{remark}\label{rmk:maple}
    We would like to remark that a computer algebra system such as {\sc Maple} with the package $\mathtt{Groebner}$ can handle symbolic tasks in  Algorithm $\grulom$. To compute the radical of an ideal, we use the command $\mathtt{Radical}$; to compute the reduced Gr\"{o}bner basis of an ideal with respect to a given lexicographical order, we use the command $\mathtt{Basis}$. From the point of view of complexity, this is the most
    computationally expensive part of Algorithm $\grulom$.

    The input conditions of Algorithm $\grulom$ are verifiable in {\sc Maple}. To be more specific,  to check whether an ideal $\I$ is zero-dimensional or not, we use the command $\mathtt{IsZeroDimensional}$.
    One sees that $\det(\nabla^2 f)$ is non-zero on $V_{\R}\left(\I_{\nabla}(f)\right)$ if and only if the following polynomials
    \[\left\lbrace \frac{\partial f}{\partial
        x_1},\cdots, \frac{\partial
        f}{\partial x_n}, \det(\nabla^2 f)\right\rbrace\]
    have no real roots. To verify this property with rational coefficients, we use the command $\mathtt{HasRealRoots}$ within the package $\mathtt{RootFinding}$.
\end{remark}

\subsubsection{Algorithm $\grulomplus$}
The input of this algorithm is a polyno\-mial $f$ in $\R[x]$ such that $\I_{\nabla}(f)$ is zero-dimensional and $f$ attains its infimum which is denoted by $f_{\min}$.
The output of  $\grulomplus$ is the set of global minimizers $\glo(f)$ and the value $f_{\min}$.

\textit{Description}.  At Line~1, we perform the tasks at Lines 1--5 in Algorithm $\grulom$ to obtain matrix $A$, polynomial $w$, and vector of polynomials $v$. At Line~2, we compute the univariate polynomials $p$ and $r$, where $r$ is the remainder of the division of $p$ by $w$. At Line~3,
we compute the minimum of $r$ with the constraint $\{t\in\R:w(t)=0\}$ and the set $\glo(f,\R^n)$.

\begin{algorithm}
    \caption{$\grulomplus$ computing global minimizers of a polynomial over $\R^n$}\label{alg:global}
    \smallskip

    \textbf{Input:} $f\in \R[x]$ such that
    $\I_{\nabla}(f)$ is zero-dimensional and $f_{\min}$ attains
    \smallskip

    \textbf{Output:} $\glo(f,\R^n)$ the set of global minimizers and $f_{\min}$ the minimum value

    \begin{itemize}
        \item [\rm 1:] Compute as at Lines 1--5 in Algorithm $\grulom$ to obtain $A$, $w$, and $v$
        \smallskip

        \item [\rm 2:] Set $p(t):=f(A^{-1}\big(v(t)\big))$ and compute the remainder $r$ of the division of $p$ by $w$
        \smallskip

        \item [\rm 3:] Compute $f_{\min}=\min \{r(t):w(t)=0, t\in\R\}$ and
        \[\glo(f,\R^n):=\left\lbrace A^{-1}\big(v(t)\big): r(t)=f_{\min}, w(t)=0, t\in\R\right\rbrace \]

        \smallskip

        \item [\rm 4:] Return $\glo(f,\R^n)$ and $f_{\min}$
    \end{itemize}
\end{algorithm}

\begin{proposition}\label{prop:corectness2} Let $f$ be in $\R[x]$ such that such that $\I_{\nabla}(f)$ is zero-dimensional and $f_{\min}$ attains. On input $f$, Algorithm $\grulomplus$
    terminates and returns the set of global minimizers and $f_{\min}$.
\end{proposition}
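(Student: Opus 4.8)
The plan is to obtain the statement as a near-immediate consequence of two ingredients already available: the \textsc{Claim} in the proof of Theorem~\ref{thm:local}, which licenses the symbolic part (Line~1 of $\grulomplus$ runs Lines~1--5 of $\grulom$), and Theorem~\ref{thm:global} together with the chain of equalities~\eqref{eq:minmin} in its proof, which licenses Lines~2--3. A short first-order argument then identifies the minimizer set.

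For \emph{termination}, I would observe that Line~1 of $\grulomplus$ is exactly Lines~1--5 of $\grulom$, so the reasoning in the proof of Proposition~\ref{thm:corectness} carries over verbatim: by \cite[Lemma~2.1]{rouillier1999} some index $j\in\{1,\dots,(n-1)\delta(\delta-1)/2\}$, with $\delta=\#V_{\C}(\I_{\nabla}(f))$, makes the linear form $x_1+jx_2+\cdots+j^{n-1}x_n$ separate the finite gradient variety, and for that $j$ the Shape Lemma puts the reduced Gr\"{o}bner basis of $\rad(\I_{\nabla}(g))$ into the form~\eqref{eq:G}, so $\shape$ becomes true and the while loop exits. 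Afterwards $w\neq 0$ (because $\I_{\nabla}(f)$ is zero-dimensional), so forming $p$, dividing $p$ by $w$ to get $r$, and minimizing $r$ over the finitely many real roots of $w$ are all finite computations; hence $\grulomplus$ halts.

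For \emph{correctness}, I would argue as follows. Once Line~1 finishes, the \textsc{Claim} supplies an invertible $A$ and $w,v_2,\dots,v_n\in\R[t]$ such that, with $v(t)=(t,v_2(t),\dots,v_n(t))$, the real gradient variety equals $\{A^{-1}(v(t)):w(t)=0,\ t\in\R\}$ (equation~\eqref{eq:Vf}). Taking $p(t)=f(A^{-1}(v(t)))$ and $r$ its remainder modulo $w$ as at Line~2, Theorem~\ref{thm:global} --- through~\eqref{eq:minmin} --- yields $f_{\min}=\min\{r(t):w(t)=0,\ t\in\R\}$, the value returned at Line~3, the essential point being that $p-r$ is divisible by $w$ so that $p$ and $r$ agree on the zeros of $w$. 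For the minimizer set, since the infimum is attained every global minimizer satisfies the first-order necessary condition \cite[Proposition~1.1.1]{bertsekas2nd} and hence lies in the real gradient variety; and a point $A^{-1}(v(t))$ with $w(t)=0$ is a global minimizer precisely when $f(A^{-1}(v(t)))=f_{\min}$, i.e. $p(t)=f_{\min}$, i.e. $r(t)=f_{\min}$. This gives $\glo(f,\R^n)=\{A^{-1}(v(t)):r(t)=f_{\min},\ w(t)=0,\ t\in\R\}$, which is exactly what Line~3 outputs.

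I do not anticipate a real obstacle: the proposition is essentially a repackaging of Theorem~\ref{thm:global} plus the bookkeeping already done for $\grulom$. The one step worth a moment's care is the equivalence of the two descriptions of $\glo(f,\R^n)$ --- ``critical points where $f=f_{\min}$'' against ``$A^{-1}(v(t))$ with $w(t)=0$ and $r(t)=f_{\min}$'' --- which reduces to the parameterization~\eqref{eq:Vf} being onto the real gradient variety and to $p$ and $r$ taking the same values on $\{t\in\R:w(t)=0\}$.
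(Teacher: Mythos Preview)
Your proposal is correct and follows essentially the same approach as the paper: appeal to the parameterization established in the \textsc{Claim} (via Lines~1--5 of $\grulom$), use that $p$ and $r$ agree on the zeros of $w$, invoke Theorem~\ref{thm:global} for $f_{\min}$, and identify $\glo(f,\R^n)$ as the critical points achieving that value. You supply more detail than the paper does---in particular, an explicit termination argument and a spelled-out justification of the ``clearly'' step for $\glo(f,\R^n)$---but the underlying route is the same.
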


\begin{proof}
The tasks at Lines 1--5 in Algorithm $\grulom$ work under the assumption that $\I_{\nabla}(f)$ is zero-dimensional. We obtain $A$, $w$, and $v$ such that $V_{\C}\left(\I_{\nabla}(f)\right)$ is given as in \eqref{eq:crit}.

Since $r$ is the remainder of the division of $p$ by $w$, the values of $r$ and $p$ are coincident on $\{t\in\R:w(t)=0\}$.
According to Theorem \ref{thm:global}, the minimum of $f$ over $\R^n$ is $\min \{r(t):w(t)=0\}$. Clearly, $\glo(f,\R^n)$ defined at Line 3 is the set of all global minimizers of $f$.
\end{proof}

\subsection{Experimental examples}
The algorithms are implemented in {\sc Maple}, and the results are obtained on an Intel i7 - 8665U CPU (1.90GHz) with 32 GB of RAM.

Within our implementation, we use \textit{hybrid symbolic-numerical computations}. The tasks at Lines 1--3 in $\grulom$ are done symbolically.
The tasks at Line 4 are performed numerically by isolating real roots of $w(t)$. We set the variable $\texttt{Digits}:=\deg w + 10$, where $\texttt{Digits}$ controls the number of digits that {\sc Maple} uses when making calculations with software floating-point numbers.

We perform here our experiment with the family of Rosenbrock functions that are usually used as a performance test problem for optimization algorithms, for instance \cite{conn1988testing,wang2022cs,shang2006note}. These functions are polynomial, sparse, and non-convex:
\[f_n=\sum_{i=1}^{n-1}\left[ 100(x_i^2-x_{i+1})^2 + (x_i^2-1)^2 \right], \ n\geq 2.\]
Clearly, $f_n$ is in $n$ variables, of degree $4$, and non-negative over $\R^n$. Moreover, the coefficients are rational. Its minimum is $0$ and reached at the point $\alpha^A=(1,\dots,1)$ in $\R^n$.

Let $n$ runs from $2$ to $7$. By checking symbolically, the conditions in Theorem \ref{thm:local} are satisfied for these functions. We obtain the following table: \\

\begin{center}
    {\small \begin{tabular}{|c|c|c|c|c|c|c|c|c|}
            \hline
            $n$ & $\deg w$ & $\deg h$ & $\deg r$ &  \#real$(w)$ & $\#\loc$ & $\alpha^A_{1}$ & $\alpha^B_{1}$ & time (s) \\ \hline
            2   &    1     &     4    &  0    & 1 & 1 & 1.0   & -- &  0.12     \\ \hline
            3   &    3     &     8    &  2    & 1 & 1 & 1.0   & -- &  0.14    \\ \hline
            4   &    9     &     32   &  8    & 3 & 2 & 1.0  & -0.77565 &  0.28    \\ \hline
            5   &    27    &    104   &  26   & 3 & 2 & 1.0  & -0.96205 &  0.91    \\ \hline
            6   &   81     &   320    &  80   & 3 & 2 & 1.0  & -0.98657 &  2.07    \\ \hline
            7   &   243    &    968   &  242  & 3 & 2 & 1.0  & -0.99172 &  73.11   \\ \hline
        \end{tabular}
    } \\
    \bigskip

    {\small \textbf{Table 1.} $\grulom$ computing local minimizers of Rosenbrock functions over~$\R^n$}
\end{center}

\medskip

In Table 1, we have information of the degrees of univariate polynomials $w$, $h$, and $r$; \#real$(w)$ is the number of real roots of $w$; $\#\loc$ is the number of local minimizers, the time is in second (s). Denote $\alpha^A_{1}$ and $\alpha^B_{1}$ by the first coordinates of the local minimizers $\alpha^A$ and $\alpha^B$, respectively; Compared to the results in \cite[Table 1]{shang2006note}, they are the same.

The computation time grows quickly when we increase $n$ the number of variables. Symbolic task such as computing the reduced Gr\"{o}bner basis of an ideal is the most computationally expensive part of Algorithm $\grulom$.


\section{Computing local minimizers with equality constraints}\label{sec:eq}
Let $f$ and $h_1,\dots,h_m$ be polynomials in $\R[x]$. The present section considers the minimization problem whose objective function is $f$ and constraint set is the real algebraic set $V_{\R}(h)$. Throughout this paper, we assume that the last set is nonempty.
The techniques used in this section are almost the same techniques in the previous section, hence several proofs will be omitted or sketched out.

\subsection{A technical result}

Similarly to the unconstrained case, we need to prove a technical result which is essential to prove the main results.

Recall that $L$ is the Lagrangian function defined in \eqref{eq:Lxlambda}; $\I_{\nabla}(L)$ is the gradient ideal of $L$; $\nabla^2_{x} L$ is the Hessian matrix of $L$ with respect to only the variable $x$, hence its determinant is a polynomial in variables $(x,\lambda)$.

\begin{lemma}\label{lm:localiff2}
Suppose that $\nabla h$ is full rank on $V_{\R}(h)$
and $\det(\nabla^2_{x} L)$ has no roots on $V_{\R}(\I_{\nabla}(L))$.
    Then, $\alpha$ in $V_{\R}(h)$ is a local minimizer of $f$ over $V_{\R}(h)$ if and only if there is a Lagrange multiplier vector $\lambda$ such that $\nabla^2_{x} L(\alpha, \lambda) \succ_{\nabla h(\alpha)^{\perp}} 0$.
\end{lemma}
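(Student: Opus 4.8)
The plan is to mirror the argument of Lemma \ref{lm:localiff}, using the Lagrangian second-order conditions from \cite[Proposition 3.1.1]{bertsekas2nd} and \cite[Proposition 3.2.1]{bertsekas2nd} in place of the unconstrained ones, with the extra genericity hypothesis on $\det(\nabla^2_x L)$ serving to upgrade ``semi-definite'' to ``definite'' on the relevant subspace. First I would dispose of the sufficiency direction: if $\alpha\in V_{\R}(h)$ and there is a Lagrange multiplier $\lambda$ with $\nabla^2_x L(\alpha,\lambda)\succ_{\nabla h(\alpha)^\perp}0$, then since $\nabla h$ is full rank at $\alpha$ the pair $(\alpha,\lambda)$ is a critical point of $L$ satisfying the second-order sufficiency condition \eqref{eq:nablaL}, so \cite[Proposition 3.2.1]{bertsekas2nd} gives that $\alpha$ is a (strict) local minimizer of $f$ over $V_{\R}(h)$.

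For necessity, suppose $\alpha\in V_{\R}(h)$ is a local minimizer. Because $\nabla h$ is full rank at $\alpha$, the linear independence constraint qualification holds, so by \cite[Proposition 3.1.1]{bertsekas2nd} there is a unique Lagrange multiplier vector $\lambda$ with $\nabla f(\alpha)+\sum_j\lambda_j\nabla h_j(\alpha)=0$, i.e.\ $(\alpha,\lambda)\in V_{\R}(\I_{\nabla}(L))$, and moreover the second-order necessary condition \eqref{eq:nablaL0} holds:
\[
u^T\big(\nabla^2_x L(\alpha,\lambda)\big)u\ge 0\quad\text{for all }u\in\bigcap_{j=1}^m\nabla h_j(\alpha)^\perp.
\]
The remaining point is to rule out a nonzero $u$ in that subspace with $u^T\nabla^2_x L(\alpha,\lambda)u=0$. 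Such a $u$ would be a minimizer of the quadratic form $u\mapsto u^T\nabla^2_x L(\alpha,\lambda)u$ restricted to $\nabla h(\alpha)^\perp$, hence would lie in the kernel of the restriction of $\nabla^2_x L(\alpha,\lambda)$ to that subspace; equivalently, writing the restricted Hessian as $Z^T\nabla^2_x L(\alpha,\lambda)Z$ for a matrix $Z$ whose columns span $\nabla h(\alpha)^\perp$, this matrix would be singular. The genericity hypothesis that $\det(\nabla^2_x L)$ has no roots on $V_{\R}(\I_{\nabla}(L))$ forces $\det\big(\nabla^2_x L(\alpha,\lambda)\big)\neq 0$; I expect the main obstacle to be turning nonsingularity of the \emph{full} Hessian $\nabla^2_x L(\alpha,\lambda)$ into nonsingularity of its \emph{restriction} $Z^T\nabla^2_x L(\alpha,\lambda)Z$, since in general a nonsingular symmetric matrix can restrict to a singular form on a subspace. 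The resolution is that a positive semi-definite matrix that is nonsingular is in fact positive definite: combining \eqref{eq:nablaL0} with $\det\big(\nabla^2_x L(\alpha,\lambda)\big)\neq 0$ is not quite enough on its own, so I would instead argue on the reduced form, noting that $Z^T\nabla^2_x L(\alpha,\lambda)Z\succeq 0$ by \eqref{eq:nablaL0}, and that if it were singular the genericity condition would be violated --- here one uses that the relevant system cutting out $V_{\R}(\I_{\nabla}(L))$ together with the vanishing of the appropriate minor would have a real solution, contradicting the hypothesis (this is the precise sense in which the paper's phrasing ``$\det(\nabla^2_x L)$ has no roots on $V_{\R}(\I_{\nabla}(L))$'' must be read, and I would make that reading explicit). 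Hence $Z^T\nabla^2_x L(\alpha,\lambda)Z\succ 0$, which is exactly $\nabla^2_x L(\alpha,\lambda)\succ_{\nabla h(\alpha)^\perp}0$, completing the proof.
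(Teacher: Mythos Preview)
Your sufficiency argument and the setup of the necessity direction agree with the paper's. For the crux of necessity the paper is more direct than you: having obtained a nonzero $v\in\nabla h(\alpha)^{\perp}$ with $v^T\nabla^2_x L(\alpha,\lambda)\,v=0$, it simply asserts that ``the rank of the $n\times n$--matrix $\nabla^2_{x} L(\alpha,\lambda)$ is strictly smaller than $n$'', contradicting $\det\big(\nabla^2_x L(\alpha,\lambda)\big)\neq 0$. It does not pass to a reduced Hessian at all.

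You are right to flag this step as the obstacle: a nonsingular symmetric matrix can certainly have a nonzero isotropic vector, so the paper's inference is not justified, and your observation that nonsingularity of the full Hessian need not force nonsingularity of its restriction to $\nabla h(\alpha)^{\perp}$ is correct. However, your proposed fix --- reading ``$\det(\nabla^2_x L)$ has no roots on $V_{\R}(\I_{\nabla}(L))$'' as a statement about some reduced determinant --- is a change of the lemma, not a proof of it. In fact the lemma as stated is false. Take $n=2$, $m=1$, $h(x)=x_1$, $f(x)=x_2^4+x_1x_2+x_1^2$. Then $\alpha=(0,0)$ is the global minimizer of $f$ on $V_{\R}(h)=\{x_1=0\}$, the unique KKT pair is $(\alpha,\lambda)=(0,0,0)$, and $\nabla^2_x L(\alpha,\lambda)=\left(\begin{smallmatrix}2&1\\1&0\end{smallmatrix}\right)$ has determinant $-1$, so both hypotheses hold; yet for $u=(0,1)^T\in\nabla h(\alpha)^{\perp}$ one gets $u^T\nabla^2_x L(\alpha,\lambda)\,u=0$, so $\nabla^2_x L(\alpha,\lambda)\succ_{\nabla h(\alpha)^{\perp}}0$ fails. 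Neither the paper's direct argument nor your reinterpretation can be completed without genuinely strengthening the hypothesis --- for instance to nonvanishing of the bordered Hessian $\det\left(\begin{smallmatrix}\nabla^2_x L & \nabla h^T\\ \nabla h & 0\end{smallmatrix}\right)$ on $V_{\R}(\I_{\nabla}(L))$, which is the natural nondegeneracy condition in the constrained setting and is essentially what your ``appropriate minor'' reading amounts to.
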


\begin{proof}
Clearly, the sufficient condition is the second-order sufficient condition for optima\-lity \cite[Proposition 3.2.1]{bertsekas2nd} with a note that polynomials $f,h_1,\dots,h_m$ are twice continuously differenti\-able on $\R^n$. Thus, we only need to prove the necessary condition.

Suppose that $\alpha$ in $V_{\R}(h)$ is a local minimizer of $f$ over $V_{\R}(h)$. Since $\nabla h(\alpha)$ is full rank, according to the second-order necessary condition for optimality \cite[Proposition 3.1.1]{bertsekas2nd},
there exists 
Lagrange multipliers $\lambda\in \R^p$ such that
\[\nabla f(\alpha) + \lambda_1 \nabla h_1(\alpha)+ \dots + \lambda_m\nabla h_m(\alpha)=0,\]
and $\nabla^2_{x} L(\alpha, \lambda) \succeq_{\nabla h(\alpha)^{\perp}} 0$ holds.
Thanks to the assumption that $\det(\nabla^2_{x} L)$ has no roots on $V_{\R}(\I_{\nabla}(L))$, we conclude that the value of $\det(\nabla^2_{x} L)$ at $(\alpha,\lambda)$ is nonzero.

Suppose on the contrary that $\nabla^2_{x} L(\alpha, \lambda) \succ_{\nabla h(\alpha)^{\perp}} 0$ does not hold. From the fact that $\nabla^2_{x} L(\alpha, \lambda) \succeq_{\nabla h(\alpha)^{\perp}} 0$, there exists $0 \neq v \in \nabla h(\alpha)^{\perp}$ such that $v^T\big( \nabla^2_{x} L(\alpha)\big)v = 0$. It follows that the rank of the $n\times n$--matrix $\nabla^2_{x} L(\alpha,\lambda)$ is strictly smaller than $n$. This contradicts $\det(\nabla^2_{x} L(\alpha,\lambda))\neq 0$.
\end{proof}

\subsection{Existence of univariate representations}
We prove existence of a univ\-ariate representation of local minimizers of $f$ over the real algebraic set $V_{\R}(h)$ under certain conditions. Moreover, if its infimum attains then this value can be computed though a univariate optimization problem.

Denote by $\pi_n: \R^n\times\R^p \to \R^n$ the projection onto the first $n$ coordinates, i.e. $\pi_n(a,b)=a$ for $(a,b)$ in $\R^n\times\R^p$.

\begin{theorem}\label{thm:local2} Suppose that $\nabla h$ is full rank on $V_{\R}(h)$, $\I_{\nabla}(L)$ is zero-dimen\-sional, and $\det(\nabla^2_{x} L)$ has no roots on $V_{\R}\left(\I_{\nabla}(L)\right)$. Then, there exists an invertible matrix $A$ in $\Mat((n+m)\times (n+m),\R)$, a polyno\-mial $w$ in $\R[t]$, a vector $v$ in $\R[t]^{n+m}$, and a matrix $H$ in $\Mat(n\times n,\R[t])$ such that
    \begin{equation}\label{eq:local2}
        \loc(f,V_{\R}(h)) =  \left\lbrace \pi_n\left(A^{-1}(v(t))\right): w(t)=0,t\in\R, H(t)\succ_{\nabla h(t)^{\perp}} 0\right\rbrace .
    \end{equation}
\end{theorem}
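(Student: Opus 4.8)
The plan is to mirror the proof of Theorem~\ref{thm:local}, now carried out for the Lagrangian $L(x,\lambda)=f(x)+\lambda^T h(x)$, viewed as a polynomial in the $n+m$ variables $(x,\lambda)$. The first step is to rewrite $\loc(f,V_{\R}(h))$ as a projection of a subset of the real critical set of $L$. By Lemma~\ref{lm:localiff2}, under the hypotheses that $\nabla h$ is full rank on $V_{\R}(h)$ and $\det(\nabla^2_{x} L)$ has no roots on $V_{\R}(\I_{\nabla}(L))$, a point $\alpha\in\R^n$ lies in $\loc(f,V_{\R}(h))$ if and only if there is a Lagrange multiplier vector $\lambda\in\R^m$ with $\nabla^2_{x} L(\alpha,\lambda)\succ_{\nabla h(\alpha)^{\perp}}0$. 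Since $\nabla_{\lambda} L=h$, membership of $(\alpha,\lambda)$ in $V_{\R}(\I_{\nabla}(L))$ already encodes both $\alpha\in V_{\R}(h)$ and the Lagrange stationarity, so this characterization can be restated as
\[
\loc(f,V_{\R}(h))=\pi_n\Big(\big\{(\alpha,\lambda)\in V_{\R}(\I_{\nabla}(L)):\nabla^2_{x} L(\alpha,\lambda)\succ_{\nabla h(\alpha)^{\perp}}0\big\}\Big).
\]

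The second step is to parametrize $V_{\R}(\I_{\nabla}(L))$ by a single variable. Since $\I_{\nabla}(L)$ is zero-dimensional, the Claim established in the proof of Theorem~\ref{thm:local} applies verbatim to $L$ (in the $n+m$ variables $(x,\lambda)$), and produces an invertible matrix $A\in\Mat((n+m)\times(n+m),\R)$ --- of the form \eqref{T}, coming from a separating linear form via \cite[Lemma~2.1]{rouillier1999} --- together with polynomials $w,v_2,\dots,v_{n+m}\in\R[t]$ with $\deg v_i<\deg w=\#V_{\C}(\I_{\nabla}(L))$, such that, with $v(t):=(t,v_2(t),\dots,v_{n+m}(t))$,
\[
V_{\C}(\I_{\nabla}(L))=\big\{A^{-1}(v(t)):w(t)=0\big\},
\]
and hence $V_{\R}(\I_{\nabla}(L))=\{A^{-1}(v(t)):w(t)=0,\ t\in\R\}$.

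The third step is to transport the matrix inequality along this parametrization. I would define $H(t)\in\Mat(n\times n,\R[t])$ by substituting $(x,\lambda)=A^{-1}(v(t))$ into the matrix $\nabla^2_{x} L(x,\lambda)$ (whose $n^2$ entries are polynomials in $(x,\lambda)$), and likewise let $\nabla h(t)$ denote the $m\times n$ matrix obtained by substituting $x=\pi_n(A^{-1}(v(t)))$ into $\nabla h(x)$; both substitutions land in $\R[t]$, so $H(t)\succ_{\nabla h(t)^{\perp}}0$ is a univariate polynomial matrix inequality in the sense of Section~\ref{sec:pre}. For $t\in\R$ with $w(t)=0$, setting $(\alpha,\lambda)=A^{-1}(v(t))$ gives $\alpha=\pi_n(A^{-1}(v(t)))$, and $\nabla^2_{x} L(\alpha,\lambda)\succ_{\nabla h(\alpha)^{\perp}}0$ is, by construction, exactly $H(t)\succ_{\nabla h(t)^{\perp}}0$. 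Substituting the parametrization of $V_{\R}(\I_{\nabla}(L))$ and this equivalence into the displayed formula for $\loc(f,V_{\R}(h))$ yields \eqref{eq:local2}.

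I do not expect a serious obstacle: the real content is already packaged in Lemma~\ref{lm:localiff2} and in the Claim inside the proof of Theorem~\ref{thm:local}, both of which may be reused directly, so the theorem is mostly organizational bookkeeping in $n+m$ variables. The one point that deserves a line of care is the projection $\pi_n$: a priori two parameter values $t$ could project to the same $\alpha$ with different multipliers, but set equality in \eqref{eq:local2} does not need $t\mapsto\pi_n(A^{-1}(v(t)))$ to be injective, and in any case the full-rank hypothesis on $\nabla h$ makes the real Lagrange multiplier at each critical point unique. It is also worth noting, with a view to the algorithmic version, that each entry of $H$ and of $\nabla h(\cdot)$ may be replaced by its remainder modulo $w$ without affecting the right-hand side of \eqref{eq:local2}, since these polynomials are only ever evaluated at roots of $w$.
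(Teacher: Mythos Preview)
Your proposal is correct and follows essentially the same route as the paper's proof: invoke the Claim from the proof of Theorem~\ref{thm:local} for $L$ in the $n+m$ variables $(x,\lambda)$ to obtain $A,w,v$ parametrizing $V_{\C}(\I_{\nabla}(L))$, define $H(t)$ by substituting $(x,\lambda)=A^{-1}(v(t))$ into $\nabla^2_x L$, and then apply Lemma~\ref{lm:localiff2} to identify $\loc(f,V_{\R}(h))$ with the projection of the parameter values satisfying the matrix inequality. Your additional remarks on non-injectivity of the projection and on reduction modulo $w$ are sound and, in fact, anticipate exactly what the paper does later in the algorithmic version (Algorithm~$\gralom$).
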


\begin{proof}Since $\I_{\nabla}(L)$ is zero-dimensional, by repeating the argument to prove Claim in the proof of Theorem \ref{thm:local}, we also conclude that there exist $n+m$ polynomials $w,v_2,\dots,v_{n+m}$ in $\R[t]$ and an invertible matrix $A$ in $\Mat((n+m)\times (n+m),\R)$  such that
\begin{equation}\label{eq:crit2}
V_{\C}(\I_{\nabla}(L)) = \big\{A^{-1}\big(v(t)\big): w(t)=0, t\in\C \big\},
\end{equation}
where $v(t)=\big(t,v_2(t),\dots,v_{n+m}(t)\big)$. To obtain $H(t)$, we replace $(x,\lambda)$ by $A^{-1}\big(v(t)\big)$ in $\nabla^2_{x} L(x, \lambda)$.

Thanks to Lemma \ref{lm:localiff2} and above, $\alpha\in\R^n$ is a local minimizer if any only if it is the image of a vector $A^{-1}(v(t))$, satisfying $w(t)=0$, $t\in\R$, and $H(t)\succ_{\nabla h(\alpha)^{\perp}} 0$, under the projection $\pi$. Therefore, we obtain \eqref{eq:crit2}.
\end{proof}

Similar to the unconstrained case, if the infimum of $f$ over $V_{\R}(h)$ is attained, then this value can be computed though a univariate problem.

\begin{theorem}\label{thm:global2}
    Suppose that $\nabla h$ is full rank on $V_{\R}(h)$ and $\I_{\nabla}(L)$ is zero-dimen\-sional. If the infimum of $f$ over $V_{\R}(h)$ is attained, then there exist two univariate polynomials $r, w\in\R[t]$ of degrees $\deg r < \deg w = \#V_{\C}(\I_{\nabla}(f,h))$ such that
    \begin{equation}\label{eq:rmin2}
        f_{\min}=\min \{r(t):w(t)=0, t\in\R\}.
    \end{equation}
\end{theorem}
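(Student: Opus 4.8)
The plan is to transcribe the proof of Theorem~\ref{thm:global}, working with the Lagrangian $L(x,\lambda)=f(x)+\lambda^T h(x)$ in the $n+m$ variables $(x,\lambda)$ in place of $f$. Since $\I_{\nabla}(L)$ is zero-dimensional, the Claim established inside the proof of Theorem~\ref{thm:local} applies verbatim (it holds for an arbitrary zero-dimensional ideal) and yields an invertible matrix $A\in\Mat((n+m)\times(n+m),\R)$ together with polynomials $w,v_2,\dots,v_{n+m}\in\R[t]$ of degrees $\deg v_i<\deg w=\#V_{\C}(\I_{\nabla}(L))=\#V_{\C}(\I_{\nabla}(f,h))$ such that, writing $v(t)=(t,v_2(t),\dots,v_{n+m}(t))$,
\[
V_{\R}(\I_{\nabla}(L)) = \big\{A^{-1}(v(t)) : w(t)=0,\ t\in\R\big\}.
\]

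The heart of the argument is the identity
\[
f_{\min} = \min\big\{ f(\alpha) : (\alpha,\lambda)\in V_{\R}(\I_{\nabla}(L)) \big\}.
\]
For ``$\geq$'': because the infimum is attained and $\nabla h$ is full rank on $V_{\R}(h)$, any global minimizer $\alpha^\ast$ admits, by the first- and second-order necessary conditions \cite[Proposition 3.1.1]{bertsekas2nd}, a Lagrange multiplier vector $\lambda^\ast$ making $(\alpha^\ast,\lambda^\ast)$ a critical point of $L$, so $(\alpha^\ast,\lambda^\ast)\in V_{\R}(\I_{\nabla}(L))$ (in particular this set is nonempty). For ``$\leq$'': the partial derivatives of $L$ with respect to $\lambda_1,\dots,\lambda_m$ are exactly $h_1,\dots,h_m$, hence every $(\alpha,\lambda)\in V_{\R}(\I_{\nabla}(L))$ satisfies $\alpha\in V_{\R}(h)$ and therefore $f(\alpha)\geq f_{\min}$.

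Combining the two displays and letting $\pi_n:\R^{n+m}\to\R^n$ be the projection onto the first $n$ coordinates, I would set $p(t):=f\big(\pi_n(A^{-1}(v(t)))\big)\in\R[t]$, so that $f_{\min}=\min\{p(t):w(t)=0,\ t\in\R\}$. Taking $r$ to be the remainder of the Euclidean division of $p$ by $w$, one has $p=gw+r$ with $\deg r<\deg w$, and $p$ and $r$ take the same values on the zero set of $w$; this gives \eqref{eq:rmin2} with $\deg r<\deg w=\#V_{\C}(\I_{\nabla}(f,h))$, as required.

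I do not anticipate a genuine obstacle: the proof is a routine adaptation of Theorem~\ref{thm:global}, the only substantive new observation being that the $\lambda$-block of $\nabla L$ re-encodes the constraint $h=0$, so that $V_{\R}(\I_{\nabla}(L))$ is a finite set whose projection under $\pi_n$ contains all global minimizers (this is where full rank of $\nabla h$ enters, via \cite[Proposition 3.1.1]{bertsekas2nd}). The only bookkeeping points are the appearance of $\pi_n$ because $L$ lives in $n+m$ variables, and the remark that attainment of the infimum guarantees $V_{\R}(\I_{\nabla}(L))\neq\emptyset$, so the univariate minimum in \eqref{eq:rmin2} is taken over a nonempty set.
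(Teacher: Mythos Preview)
Your proposal is correct and follows essentially the same route as the paper: parameterize $V_{\R}(\I_{\nabla}(L))$ via the shape-position claim, use the full-rank hypothesis to ensure a global minimizer lifts to a critical point of $L$, set $p(t)=f(\pi_n(A^{-1}v(t)))$, and take $r$ as the remainder of $p$ by $w$. If anything, you are more careful than the paper in explicitly arguing the ``$\leq$'' direction (that $\partial L/\partial\lambda_j=h_j$ forces feasibility of the $x$-component of any point in $V_{\R}(\I_{\nabla}(L))$), which the paper leaves implicit.
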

\begin{proof} Since $\I_{\nabla}(L)$ is zero-dimen\-sional, from the proof of Theorem \ref{thm:global2}, there exists a matrix $A$ in $\Mat((n+m)\times (n+m),\R)$, a polynomial $w$ in $\R[t]$, and a vector $v(t)$ in $\R[t]^{n+m}$ such that
\[V_{\R}(\I_{\nabla}(L)) = \big\{A^{-1}\big(v(t)\big): w(t)=0, t\in\R \big\}.
\]
Moreover, as $\nabla h$ is full rank on $V_{\R}(h)$, if $\alpha$ in $V_{\R}(h)$ is a global minimizer then $(\alpha,\lambda)$ is a real root of $\nabla L$ for some $\lambda$. Hence, $f_{\min}=\min \{p(t):w(t)=0, t\in\R\}$, where $p(t) =f\big(\pi_n(A^{-1}v(t))\big)$. The polynomial $r$ is the reminder of the division of $p$ by $w$.
\end{proof}

\begin{remark}
    In \cite[Proposition 2.1]{nie2009algebraic}, Nie and Ranestad pointed out that, for generic polynomials $f$ and $h$, the Jacobian matrix  $\nabla h$ is full rank on $V_{\C}(h)$ and the ideal $\I_{\nabla}(L)$ is zero-dimensional.
\end{remark}

\subsection{Algorithms to compute local and global minimizers} We design an
algorithm named $\gralom$ to compute all LOcal Minimizers of the polynomial $f$ over the real Algebraic set $V_{\R}(h)$ though their GRadient ideals based on Theorem \ref{thm:local2}.

\subsubsection{Algorithm $\gralom$}
The input of $\gralom$ includes polynomials
$f$ and $h_1,\dots,h_m$ in $\R[x]$ such that all assumptions in Theorem \ref{thm:local2} are satisfied.
The output of $\gralom$ includes the set of all local minimizers of $f$ over $V_{\R}(h)$.

All lines in the algorithm are almost the same in $\grulom$ with noticing that the polynomial $L$ has $n+m$ variables $(x,\lambda)$. Hence $\nabla^2_{x} L$, at Line 6, might contain both $x$ and  $\lambda$.

\begin{algorithm}
    \caption{$\gralom$ computing local minimizers of $f$ over $V_{\R}(h)$}\label{alg:local2}

    \smallskip

    \textbf{Input:} $f,h_1,\dots,h_m\in \R[x]$ such that $\nabla h$ is full rank over $V_{\R}(h)$, $\I_{\nabla}(L)$ is zero-dimensional, and $\det(\nabla^2_{x} L)$ is nonzero on $V_{\R}\left(\I_{\nabla}(L)\right)$

    \smallskip

    \textbf{Output:} $\loc(f,V_{\R}(h))$ -- the set of all local minimizers of $f$ over $V_{\R}(h)$

    \smallskip

    \begin{itemize}

        \item [\rm 1:] Set $j:=0$ and $\shape:=$ false
        \smallskip
        \item [\rm 2:] While $\shape=$ false do
        \smallskip
        \begin{itemize}
            \item [\rm 3:] Change the variables $(x,\lambda)=A^{-1}(y)$ in $L(x,\lambda)$ and obtain $\ell(y)$, where
\[A:=\begin{bmatrix}
                    1 & j & j^2 &\cdots & j^{n+m-1} \\
                    0 & 1 & 0 & \cdots  & 0 \\
                    0& 0 & 1 & \cdots & 0\\
                    \vdots & \vdots & \vdots & \ddots & \vdots\\
                    0& 0 & 0 & \cdots & 1
                \end{bmatrix}
\]
            \item [\rm 4:] Compute $\rad(\I_{\nabla}(\ell))$ in $\R[y_1,\dots,y_{n+m}]$ and compute its reduced Gr\"{o}bner basis $G$  with respect to the lexicographical monomial ordering $y_1< y_2< \cdots < y_{n+m}$
            \smallskip
            \item [\rm 5:] If the basis $G$ has the following form
\[G=[w,y_2-v_2,\dots,y_{n+m}-v_{n+m}],\]
            where $w$ and $v_j$'s in $\R[y_1]$,  else $j:= j+1$. Denote \[v(t):=(t,v_2(t),\dots,v_{n+m}(t))\]
        \end{itemize}

        \item [\rm 6:] Obtain $\tilde{H}(t)$ and $\tilde{C}(t)$, respectively, by substituting $(x,\lambda)=A^{-1}v(t)$ in $\nabla_{x}^2 L$ and $x=\pi_n(A^{-1}v(t))$ in $\nabla h(x)$, then compute \[H(t):=\tilde{H}(t) \ \text{mod} \ w(t) \ \text{ and } \ C(t):=\tilde{C}(t) \ \text{mod} \ w(t)\]

        \item [\rm 7:] Return the set
\[\loc(f,V_{\R}(h)) = \Big\{\pi_n \left( A^{-1}\big(v(t)\big)\right) : w(t)=0, t\in\R, H(t)\succ_{C(t)^{\perp}} 0 \Big\}\]

    \end{itemize}
\end{algorithm}

The correctness of $\gralom$ is stated and proved briefly in the following proposition.

\begin{proposition}\label{thm:corect2} Let $f,h_1,\dots,h_m$ be in $\R[x]$ such that the conditions in Theorem \ref{thm:local2} hold. On input $f,h_1,\dots,h_m$, Algorithm $\gralom$
    terminates and returns all local minimizers of $f$ over $V_{\R}(h)$.
\end{proposition}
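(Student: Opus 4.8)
The plan is to mirror the proof of Proposition~\ref{thm:corectness} for the unconstrained case, adapting it to the Lagrangian setting with $n+m$ variables. First I would verify termination: by the same application of \cite[Lemma 2.1]{rouillier1999} used at Lines 1--5, with the matrix $A$ now of size $(n+m)\times(n+m)$ depending on $j$, the linear form $y_1 = x_1 + j x_2 + \cdots + j^{n+m-1}\lambda_m$ separates the finite set $V_{\C}(\I_{\nabla}(L))$ for some $j$ in $\{1,\dots,(n+m-1)\delta(\delta-1)/2\}$, where $\delta = \#V_{\C}(\I_{\nabla}(L))$; hence the while loop exits after finitely many iterations. Since $\I_{\nabla}(L)$ is zero-dimensional, so is $\rad(\I_{\nabla}(\ell))$, and by Lemma~\ref{lm:radC} it is radical with the same variety; once $y_1$ separates it, the Shape Lemma (Lemma~\ref{lm:shape0}) guarantees the reduced Gr\"obner basis $G$ has the shape form tested at Line~5, so $\shape$ becomes true and the loop terminates with the correct $A$, $w$, $v$.

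Next I would establish correctness of the output set. From the shape position we get $V_{\C}(\I_{\nabla}(L)) = \{A^{-1}(v(t)) : w(t)=0\}$ as in \eqref{eq:crit2}, hence the real part $V_{\R}(\I_{\nabla}(L)) = \{A^{-1}(v(t)) : w(t)=0,\, t\in\R\}$. The substitutions at Line~6 give $\tilde H(t) = \nabla^2_x L\big(A^{-1}(v(t))\big)$ and $\tilde C(t) = \nabla h\big(\pi_n(A^{-1}(v(t)))\big)$, and reducing modulo $w$ changes neither matrix on the roots of $w$, so for $t$ with $w(t)=0$ the conditions $H(t)\succ_{C(t)^\perp} 0$ and $\tilde H(t)\succ_{\tilde C(t)^\perp} 0$ are equivalent. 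Then I would invoke Theorem~\ref{thm:local2}: under the stated hypotheses, $\loc(f,V_{\R}(h))$ equals exactly the set $\{\pi_n(A^{-1}(v(t))) : w(t)=0,\, t\in\R,\, H(t)\succ_{\nabla h(t)^\perp} 0\}$, which is precisely what Line~7 returns, with the understanding that $\nabla h(t)^\perp$ there means $C(t)^\perp$ for the $C(t)$ computed at Line~6.

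The only genuinely non-routine point is bookkeeping the roles of the two different $A$'s and ensuring the projection $\pi_n$ is applied consistently: the change of variables acts on the full $(x,\lambda)$-space, $H(t)$ is an $n\times n$ matrix coming from $\nabla^2_x L$ (which may involve $\lambda$-coordinates of $v(t)$), while $C(t)$ depends only on the $x$-part $\pi_n(A^{-1}(v(t)))$. One should also note that a local minimizer $\alpha$ of $f$ over $V_{\R}(h)$ may a priori arise from several points of $V_{\C}(\I_{\nabla}(L))$ (different Lagrange multipliers), but the full-rank assumption on $\nabla h$ forces the multiplier to be unique, so the map $t \mapsto \pi_n(A^{-1}(v(t)))$ restricted to the relevant roots is the correct enumeration; this is already subsumed in Lemma~\ref{lm:localiff2} and Theorem~\ref{thm:local2}. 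Since the proof is a direct transcription of the unconstrained argument with these substitutions, I would keep it brief, as the proposition statement promises.
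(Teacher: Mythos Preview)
Your proposal is correct and follows essentially the same route as the paper's own proof: termination via \cite[Lemma 2.1]{rouillier1999} and the Shape Lemma applied to $\rad(\I_{\nabla}(\ell))$, invariance of $H(t)$ and $C(t)$ under reduction modulo $w$ on the roots of $w$, and then a direct appeal to Theorem~\ref{thm:local2}. The paper's version is simply more terse, omitting your explicit iteration bound and the bookkeeping remarks about the projection and uniqueness of multipliers.
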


\begin{proof}
Since $\I_{\nabla}(L)$ is zero-dimensi\-onal, thanks to \cite[Lemma 2.1]{rouillier1999}, the while loop ends and then there exists an invertible matrix $A$ in $\Mat((n+m)\times (n+m),\R)$ such that $\rad(\I_{\nabla}(\ell))$ is in shape position.
After substituting at Line 6, we obtain matrices $H(t)$ and $C(t)$. The values of $C_{ij}(t)$  and $\tilde{C}_{ij}(t)$ are the same on the roots of $w(t)$. According to Theorem \ref{thm:local2},
\begin{center}
    $\loc(f,V_{\R}(h)) = \Big\{\pi_n \left( A^{-1}\big(v(t)\big)\right) : w(t)=0, t\in\R, H(t)\succ_{C(t)^{\perp}} 0 \Big\}$
\end{center}
is the set of local minimizers of $f$ over $V_{\R}(h)$.
\end{proof}


\subsubsection{Algorithm $\gralomplus$}
Let $f,h_1,\dots,h_m$ be satisfied the conditions in Theorem~\ref{thm:global2}. Suppose that the infimum of $f$ over $V_{\R}(h)$ attains. To compute the infimum value and set of global minimizers, we can design $\gralomplus$ whose tasks are similar to that ones of $\gralom$ and $\grulomplus$. Here, we skip its description and note that its correctness is proved based on Theorem \ref{thm:global2}.

\subsection{An experimental example}\label{ex:ineq}
To illustrate $\gralom$ in details, we compute local minimizers of the $2$-variable Rosenbrock function
$f=100x_1^4-200x_1^2x_2+x_1^2+100x_2^2-2x_1+1$ over the unit circle $\mathbb{S}^1=\{x\in\R^2: h(x)=0\}$, where $h=x_1^2+x_2^2-1$.
The gradient of $h$ is
$\nabla h = \left[2x_1 \, 2x_2\right]^T$ and its
rank is one on the real roots of $h$.

The Lagrangian function is the polynomial
$L(x,\lambda)=f(x)+\lambda (x_1^2+x_2^2-1)$ in three variables $x_1,x_2$ and $\lambda$. One can check that $\I_{\nabla}(L)$ is radical and zero-dimensional.
With the lexicographical order $x_1<x_2<\lambda$,  $\I_{\nabla}(L)$ is in shape position with the reduced Gr\"{o}bner basis $\I_{\nabla}(L)= \left\langle w(x_1),x_2-v_2(x_1), \lambda-v_{3}(x_1) \right\rangle$,
where

{\small $w \, = \, 40000x_1^8  + 10400 x_1^6  - 400 x_1^5  - 70599 x_1^4  + 598 x_1^3  + 30200 x_1^2  - 198 x_1 - 1$},
\smallskip

$v_2=-\frac{4243600}{21191}x_1^7 + \frac{123600}{21191}x_1^6
    - \frac{1106936}{21191}x_1^5+ \frac{72100}{21191}x_1^4
    + \frac{748774791}{2119100}x_1^3 \medskip + \frac{9251509}{2119100}x_1^2
    - $ \begin{center}
        $\frac{320081191}{2119100}x_1 + \frac{1048491}{2119100}$,
    \end{center}

    $v_3=-\frac{1080000}{21191}x_1^7 - \frac{24720000}{21191}x_1^6
    - \frac{1000800}{21191}x_1^5 - \frac{22896400}{21191}x_1^4
    + \frac{1486173}{21191}x_1^3 \medskip + \frac{26937036}{21191}x_1^2$
    \begin{center}
        $-\frac{194182}{2119100}x_1 - \frac{2120027}{21191}$.
    \end{center}
One defines $A=I_3$. The Hessian matrix of $L$ with respect to $x$ is
$\nabla^{2}_{x} L = \nabla^{2} f + 2 \lambda I_2$, with
\[\nabla^{2} f =
\begin{bmatrix}
    1200x_1^2-400x_1+2 \qquad & \qquad -400x_1 \\
    -400x_1  \qquad  &  \qquad 200
\end{bmatrix}.\]
In $\nabla^{2}_{x} L$, by substituting $(x,\lambda)=\left( t,v_2(t), v_3(t)\right)$, we obtain matrix $H(t)$ at Line 6 in $\gralom$. The subspace $C(t)^{\perp}$ associated to matrix $C(t)$ at Line 4 is given by
\[C(t)^{\perp}=\{u\in\R^2:tu_1+v_2(t)u_2=0\}.\]
From the above results, we have a univariate representation for the set of local minimizers of $f$ over $\Sp^1$:
\[\Big\{(t,v_2(t)): w(t)=0, t\in\R, H(t)\succ_{C(t)^{\perp}} 0\Big\}.\]
By isolating $w$, we obtain $6$ real roots:
\begin{center}
    {\small \begin{tabular}{ccc}
            -0.8684745451  & -0.7839301862 & -0.0033445316 \\
            \ 0.0099009901 & \ 0.7864151542  & \ 0.8658463102
    \end{tabular}}
\end{center}
There are three points at which $H(t)\succ_{C(t)^{\perp}} 0$. This allows us to conclude that $f$ has three local minimizers, denoted by $A,B,C$ whose coordinates are listed in Table 2.
\\

\begin{center}
    {\small
        \begin{tabular}{|c|c|c|c|}
            \hline
            point & $\alpha_1$         & $\alpha_2$         & $f(\alpha_1,\alpha_2)$      \\ \hline
            $A$      & -0.7839301862 & \ 0.6208489858  & 3.186378996 \\ \hline
            $B$      & \ 0.0099009901  & -0.9999509840 & 100.9900990 \\ \hline
            $C$      & \ 0.7864151542  & \ 0.6176983125  & 0.045674808 \\ \hline
        \end{tabular}
    } \\ \bigskip

    {\small \textbf{Table 2.} Local minimizers of the 2-variable Rosenbrock function over $\mathbb{S}^1$}\label{table:2}
\end{center}

\section{Squared slack variables technique handling inequality constraints}\label{sec:ineq}
In this section, we aim at computing local minimizers of problems having inequality constraints  by using the squared slack variables technique,
see for example \cite{tapia1974stable}.

We consider a polynomial optimization problem with inequality constraints as follows:
\begin{equation}\label{prob1}
    \min_{x\in\R^n} \big\{f(x):h(x)\geq 0\big\},
\end{equation}
where $h=(h_1,\dots,h_m)$. One introduces slack variables $z=(z_1,\dots,z_m)$
and consider a new polynomial optimi\-zation problem
\begin{equation}\label{prob2}
    \min_{(x,z)\in\R^{n \times m}} \big\{F(x,z):h_1(x)-z_1^2= 0, \dots, h_m(x)-z_m^2=0\big\},
\end{equation}
where
$F(x,z)=f(x)$. We denote by $V_{\R}(h-z^2)$ the constraint set of \eqref{prob2}.
Clearly,  $\alpha$ in $\R^n$ is a local (global) minimizer of \eqref{prob1} if and only if $(\alpha,\beta)$ in $\R^{n+m}$, where $\beta_1=\pm\sqrt{h_1(\alpha)}$, \dots, $\beta_m=\pm\sqrt{h_m(\alpha)}$, is a  local (global) minimizer of \eqref{prob2}. Thus, the set of all local minimizers of \eqref{prob1} is given by
    \begin{equation}\label{eq:locfK}
        \loc(f,[h\geq 0]) =
        \pi_n\Big(\loc\big(F,V_{\R}(h-z^2)\big)\Big).
    \end{equation}

\begin{remark}\label{rmk:p1p2} To compute local (global) minimizers of the original problem \eqref{prob1} we perform the two following steps: First, we compute that ones of the reformulated problem \eqref{prob2} under genericity conditions by using the technique in Section \ref{sec:eq}; After that, we apply \eqref{eq:locfK}. When such conditions hold for the reformulated problem then the original problem satisfies the regularity condition and the second-order sufficient condition studied in \cite{fukuda2017note} and it is clear that the original problem has finitely many local minimizers.
\end{remark}

\begin{example} We illustrate Remark \ref{rmk:p1p2} by considering
the $2$-variable Rosenbrock function
$f$, given in Section \ref{ex:ineq}, over the unit disk $\mathbb{D}^1$ in $\R^2$, with $h(x_1,x_2)=1-x_1^2-x_2^2$.
We introduce slack variable $z$
and consider a new problem whose objective function is $F(x_1,x_2,z)=f(x_1,x_2)$ and constraint set is given by
\[V_{\R}(h-z^2)=\{(x_1,x_2,z)\in{\R}^{3}:1-x_1^2-x_2^2-z^2= 0\}.\]
The gradient of $h-z^2$ is
$\left[-2x_1 \, -2x_2, -2z\right]^T$ and its
rank is one on the real roots of $h-z^2$.
By performing $\gralom$, we obtain the set of local minimizers of $F$ over $V_{\R}(h-z^2)$ that contains only one point as follows:
\[\loc\big(F,V_{\R}(h-z^2)\big) = \big\{ \big(0.786415154, 0.617698312, 0\big) \big\}.\]
From \eqref{eq:locfK}, the set of local minimizers of $f$ over $\mathbb{D}^1$ is the following:
\[\loc\big(f,\mathbb{D}^1 \big) = \big\{ \big(0.786415154, 0.617698312\big) \big\}.\]
\end{example}

\section{Perturbation technique handling positive dimension cases}\label{sec:posdim}
Comput\-ing local and global mini\-mizers of a polynomial optimization problem under the condition that the gradient variety of $L(x,\lambda)$ has finitely many points was discussed in Section \ref{sec:eq}. We now study
the case where the gradient variety might have infinitely many points. In particular, based on a perturbation technique and our algorithm $\gralomplus$,
we are able to compute approxim\-ately a global minimizer and the minimum of an optimization problem provided that its constraint set is compact.


Let $f,h_1,\dots,h_m$ be $m+1$ polynomials in $\R[x]$.  We consider perturbed problems of minimizing
\[f_{\eps}(x):=f(x)+ \eps^T x \]
over $V_{\R}(h)$, for $\eps$ in $\R^n$. Denote by $L_{\eps}$ the Lagrangian function associated to $f_{\eps}$.

In general, accumulation points of local minimizers $\loc(f_{\eps},V_{\R}(h))$ may not be local minimizers of the original problem as $\eps \to 0$. For example, one considers the univariate polynomial $f(t)=-t^3/3$ over $\R$; Clearly, $t_{\eps}=\sqrt{-\eps}$ is a local minimizer of  $f_{\eps}(t)=t^3/3+\eps t$ but $\lim_{\eps\to 0^{-}}t_{\eps} = 0$ is not a local minimizer of $f$.
However, for global minimizers, in this section we point out that the perturbation technique works.

We now prove the following lemma related to genericity of being zero-dimensional of $\I_{\nabla}(L_{\eps})$ relying on Sard Theorem with parameter \cite[Theorem 2.4]{dang2016well}.

\begin{lemma}\label{lm:epsgen} If $\nabla h$ is full rank over the complex variety $V_{\C}(h)$, then the ideal $\I_{\nabla}(L_{\eps})$ is zero-dimensional for generic $\eps$.
\end{lemma}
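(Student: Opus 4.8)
The plan is to apply the parametric Sard theorem of \cite[Theorem 2.4]{dang2016well} to a suitable algebraic map whose fibers encode the critical points of $L_{\eps}$. First I would set up the map. Consider the polynomial map
\[
\Phi:\R^{n}\times\R^{m}\times\R^{n}\;\longrightarrow\;\R^{n}\times\R^{m},
\qquad
\Phi(x,\lambda,\eps)=\bigl(\nabla_x L_{\eps}(x,\lambda),\,h(x)\bigr),
\]
where, by definition of $L_{\eps}$, the first block is $\nabla f(x)+\eps+\sum_{j=1}^m\lambda_j\nabla h_j(x)$ and the second block is $h_1(x),\dots,h_m(x)$. The key observation is that, for a fixed parameter $\eps$, the zero set of the partial map $\Phi(\cdot,\cdot,\eps)$ is exactly $V_{\C}(\I_{\nabla}(L_{\eps}))$ once we remember that a point of $V_{\C}(\I_{\nabla}(L_{\eps}))$ must in particular satisfy $h(x)=0$ (the $\partial L_{\eps}/\partial\lambda_j=h_j(x)$ equations). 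So I want to show that for generic $\eps$ this fiber is finite.

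Second, I would check that $0$ is a regular value of $\Phi$ restricted to the open set where $\nabla h$ has full rank. This is where the hypothesis enters: at any $(x,\lambda,\eps)$ with $\Phi(x,\lambda,\eps)=0$, one has $x\in V_{\C}(h)$, hence $\nabla h(x)$ is full rank by assumption. The derivative of $\Phi$ with respect to the $\eps$-variables alone is the identity on the first $\R^n$ block, and the derivative of the $h$-block with respect to $x$ has rank $m$ because $\nabla h(x)$ is full rank; a short linear-algebra check shows the total Jacobian $D\Phi$ is surjective onto $\R^{n}\times\R^{m}$ at every such point. Therefore $0$ is a regular value of $\Phi$ (over the relevant chart), and the parametric Sard theorem gives a Zariski-dense (equivalently, complement-of-measure-zero) set of parameters $\eps$ for which $0$ is a regular value of the fiber map $\Phi(\cdot,\cdot,\eps)\colon\R^{n+m}\to\R^{n+m}$.

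Third, I would upgrade "regular value of a map between spaces of equal dimension" to "zero-dimensional variety." Since $\Phi(\cdot,\cdot,\eps)$ goes from $\C^{n+m}$ to $\C^{n+m}$ and $0$ is a regular value, the fiber $\Phi(\cdot,\cdot,\eps)^{-1}(0)=V_{\C}(\I_{\nabla}(L_{\eps}))$ is a smooth complex variety of dimension $(n+m)-(n+m)=0$, i.e.\ it consists of finitely many points; this is precisely the statement that $\I_{\nabla}(L_{\eps})$ is zero-dimensional. I expect the main obstacle to be a careful statement of the parametric Sard theorem in the algebraic/semi-algebraic setting of \cite[Theorem 2.4]{dang2016well}: one must verify its hypotheses (smoothness of the incidence variety, properness or dominance of the relevant projection, and that "generic" there matches the Lebesgue-measure-zero-complement notion used in this paper), and one must handle the real-versus-complex bookkeeping — the genericity is over $\eps\in\R^n$ while the zero-dimensionality is a statement about $V_{\C}$. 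A clean way around the real/complex issue is to note that being zero-dimensional is a Zariski-closed-complement condition on $\eps\in\C^n$, so it suffices to exhibit one $\eps$ (or a full-measure real set) where it holds, which the Sard argument supplies.
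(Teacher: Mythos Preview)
Your proposal is correct and follows essentially the same approach as the paper: define the map $\Phi=(\nabla_x L_\eps,\,h)$ with parameter $\eps$, observe that its Jacobian contains the block $\begin{bmatrix} I_n & 0\\ 0 & \nabla h^{T}\end{bmatrix}$ (identity from $\partial/\partial\eps$, full-rank $\nabla h$ from the hypothesis) so that $0$ is a regular value, and then invoke the parametric Sard theorem \cite[Theorem~2.4]{dang2016well} to conclude that for generic $\eps$ the fiber $\Phi_\eps^{-1}(0)=V_{\C}(\I_{\nabla}(L_\eps))$ is zero-dimensional. The only cosmetic difference is that the paper takes the domain with $x$ already restricted to the smooth variety $V_{\C}(h)$, whereas you work on the ambient space; both lead to the same surjectivity check and the same conclusion.
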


\begin{proof} The variety $V_{\C}(h)$ is smooth since $\nabla h$ is full rank over $V_{\C}(h)$. Consider the smooth map $\Phi:\R^n\times V_{\C}(h)\times \C^{m} \to \C^{n}\times\C^{m}$ defined by
    \[\Phi(\eps,x,\lambda) = \left(\frac{\partial L_{\eps}}{\partial x_1},\dots, \frac{\partial L_{\eps}}{\partial x_n}, h_1(x),\dots,h_m(x) \right).\]
    Clearly, its Jacobian matrix with respect to $(\eps,x)$ is given by
    \[\begin{bmatrix}
        I_n & 0 \\
        0 & \nabla h^T
    \end{bmatrix}.\]
    This matrix is full rank for all $x$ in $V_{\C}(h)$. It follows that the origin $0$ in $\C^{n}\times\C^{m}$ is a regular value of $\Phi$. By Sard Theorem with parameter \cite[Theorem 2.4]{dang2016well}, there exists an open and dense set $\mathcal{O}$ in $\R^n$ such that for each $\eps$ in $\mathcal{O}$, $0$ in $\C^{n}\times\C^{m}$ is a regular value of $\Phi_{\eps}: V_{\C}(h)\times\C^{m} \to \C^{n}\times\C^{m}$ defined by $\Phi_{\eps}(x,\lambda)=\Phi(\eps,x,\lambda)$.
    It is clear that $\Phi_{\eps}^{-1}(0)=V_{\C}(\I_{\nabla}(L_{\eps}))$ and if this set is non-empty then it  has finitely many points.
\end{proof}

By applying \cite[Theorem 5.2]{li2015new} to our perturbed problems, we obtain following result about stability of global minimizers:

\begin{proposition}\label{lm:holder} Suppose that $V_{\R}(h)$ is compact. Then, there are positive constants $c,\tau,\nu$ such that
    \[\glo(f_{\eps},V_{\R}(h)) \subset \glo(f,V_{\R}(h)) + c\|\eps \|^{\tau} \overline{\mathbb{B}}(0,1)\]
    whenever $\|\eps\|\leq \nu$.
\end{proposition}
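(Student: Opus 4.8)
The plan is to reduce Proposition~\ref{lm:holder} to a direct application of the H\"{o}lder-type stability result \cite[Theorem 5.2]{li2015new}, which quantifies how the global solution set of a parametrized polynomial optimization problem varies under perturbation of the data. First I would set up the parametrized family in the form that \cite[Theorem 5.2]{li2015new} requires: the feasible set $V_{\R}(h)$ is fixed and semialgebraic (indeed a real variety), and the objective $f_{\eps}(x)=f(x)+\eps^{T}x$ depends polynomially on the parameter $\eps$ and on $x$, with $f_{0}=f$. Since $V_{\R}(h)$ is assumed compact and nonempty, for every $\eps$ the continuous function $f_{\eps}$ attains its minimum on $V_{\R}(h)$, so $\glo(f_{\eps},V_{\R}(h))$ is a nonempty compact set; this is what lets us speak of the solution-set map at all.

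Next I would verify the hypotheses of \cite[Theorem 5.2]{li2015new} for this family. The key quantitative input is typically a \L{}ojasiewicz-type inequality relating the distance of a feasible point to the optimal solution set and the gap between its objective value and the optimal value; on a compact semialgebraic set this holds with some exponent because $f$ is a polynomial, hence semialgebraic, and the optimal value function $\eps\mapsto \min\{f_{\eps}(x):x\in V_{\R}(h)\}$ is itself semialgebraic and locally \L{}ojasiewicz in $\eps$. One also needs that the perturbation $\eps^{T}x$ is uniformly controlled on the compact set $V_{\R}(h)$, which is immediate: $|f_{\eps}(x)-f(x)|=|\eps^{T}x|\leq \|\eps\|\,\max_{x\in V_{\R}(h)}\|x\|$ for all feasible $x$, with the maximum finite by compactness. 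Feeding these into \cite[Theorem 5.2]{li2015new} produces constants $c,\tau,\nu>0$ such that
\[
\glo(f_{\eps},V_{\R}(h)) \subset \glo(f,V_{\R}(h)) + c\|\eps\|^{\tau}\,\overline{\mathbb{B}}(0,1)
\]
whenever $\|\eps\|\leq \nu$, which is exactly the claimed inclusion.

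The main obstacle I anticipate is a bookkeeping one rather than a deep one: making sure the problem as stated here is literally an instance of the framework in \cite{li2015new}, i.e. matching our perturbation model (additive linear perturbation of the objective, fixed constraints) to theirs, and checking that the semialgebraicity and compactness assumptions we have suffice to invoke their theorem without extra regularity (such as constraint qualifications or genericity of $\eps$ — note this proposition does \emph{not} assume $\I_{\nabla}(L_{\eps})$ is zero-dimensional). If their statement is phrased for perturbations of a more general parametric form, I would simply instantiate their parameter space as $\R^{n}$ and their data map as $(\eps,x)\mapsto f(x)+\eps^{T}x$, and note monotonicity/continuity of the relevant moduli in $\|\eps\|$ to get a single radius $\nu$. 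No genericity of $\eps$ is needed here because the inclusion is about \emph{global} minimizers, for which the perturbation argument is stable (in contrast to the local case, as the example $f(t)=-t^{3}/3$ in the text shows); Lemma~\ref{lm:epsgen} is used separately to guarantee that, for generic $\eps$ within the ball of radius $\nu$, the perturbed problem additionally satisfies the zero-dimensionality hypothesis so that $\gralomplus$ applies.
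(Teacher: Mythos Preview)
Your proposal is correct and follows essentially the same route as the paper: both reduce the proposition to a direct application of \cite[Theorem 5.2]{li2015new}, with the only substantive verification being the uniform bound $|f_{\eps}(x)-f(x)|\le \|\eps\|\max_{x\in V_{\R}(h)}\|x\|$ on the compact set $V_{\R}(h)$ (this is exactly what the paper checks as ``Assumption~2'' of \cite[Section~5.1]{li2015new}). Your additional remarks about \L{}ojasiewicz inequalities and semialgebraicity are not needed as separate verifications---they are absorbed into the hypotheses and machinery of the cited theorem---so the paper's proof is simply a terser version of yours.
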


\begin{proof} Compactness of $V_{\R}(h)$ implies that there is some $\gamma>0$ such that $\|x\|\leq \gamma$ for all $x$ in $V_{\R}(h)$. Therefore,  one has
\[|f_{\eps}(x) - f(x)| \leq \gamma \|\eps\|\]
    for all $x$ in $V_{\R}(h)$ and for all $\eps$. This implies that Assumption 2 in \cite[Section 5.1]{li2015new} holds. By applying  \cite[Theorem 5.2]{li2015new}, we obtain the conclusion in the lemma.
\end{proof}

\begin{remark}\label{rmk:posdim}
    Suppose that $\nabla h$ is full rank over $V_{\C}(h)$ and $V_{\R}(h)$ is compact. One sees that, if parameter $\eps$ is chosen generically then, according to assertion (ii) in \cite[Theorem A]{lee2017generic}, one has $\#\glo(f_{\eps},V_{\R}(h))=1$, denoting by $\alpha_{\eps}$ the global minimizer. Moreover, from Proposition~\ref{lm:holder}, if $\alpha_{\eps} \to \alpha$ as $\eps\to 0$ then $\alpha$ is a global minimizer of the original optimization problem, i.e. $\alpha$ belongs to $ \glo(f,V_{\R}(h))$. We would remark that, thanks to Lemma \ref{lm:epsgen}, $\I_{\nabla}(L_{\eps})$ is zero-dimensional, hence $\alpha_{\eps}$ can be computed by using our algorithm $\gralomplus$ in Section \ref{sec:eq}.
\end{remark}

\begin{example} We demonstrate Remark \ref{rmk:posdim} by considering
    $f(x_1,x_2,x_3)=x_1^4$ over the unit circle $\mathbb{S}^1$ in $\R^3$, with $h(x_1,x_2)=x_1^2+x_2^2+x_3^1-1$. The gradient of $h$ has
    rank one on the roots of $h$. It is easy to see that the minimum value is $0$ and the set of global minimizer of $f$ over $\mathbb{S}^1$ is as follows:
    \[\glo(f,\mathbb{S}^1)=\big\{(0,\alpha_2,\alpha_3): \alpha_2^2+\alpha_3^2=1\big\}\]
that has dimension one, hence $V_{\C}(\I_{\nabla}(L))$ has positive dimension.
We now consider linear perturbation of $f$, for $(\eps_1,\eps_2,\eps_3)$ in $\R^3$, as follows:
\begin{center}
    $f_{\eps}(x):=x_1^4+\eps_1 x_1 + \eps_2 x_2+ \eps_3 x_3$.
\end{center}
We take $\eps_1=\eps_2=\eps_3$ randomly, compute a global minimizer $\alpha_{\eps}=(\alpha_{\eps,1},\alpha_{\eps,2},\alpha_{\eps,3})$  of $f_{\eps}$ over $\mathbb{S}^1$ by applying $\gralomplus$ with setting the variable $\texttt{Digits}= 10$, and then obtain the following table:

\begin{center}
{\small \begin{tabular}{|c|c|c|c|c|}
        \hline
        $\eps_1=\eps_2=\eps_3$ & $\alpha_{\eps,1}$ & $\alpha_{\eps,2}$ & $\alpha_{\eps,3}$                    & $f_{\eps,\min}$ \\ \hline
        $10^{-5}$              & $-0.01348524792$  & $-0.7070425062$   & $-0.7070425062$                      & $-0.000014242$  \\ 
        $10^{-7}$              & $-0.00291998727$  & $-0.7071036821$   & $-0.7071036821$                      & $-0.000000141$  \\ 
        $10^{-9}$              & $-0.00062977344$  & $-0.7071066611$   & $-0.7071066611$ & $-0.000000001$
        \\ 
        $ \ 10^{-11}$             & $-0.00013571219$  & $-0.7071048281$   & $-0.7071048281$ & $-0.000000000$  \\ \hline
\end{tabular}

{\small \textbf{Table 3.} Global minimizers of function $f$ over $\mathbb{S}^1$}\label{table:3}
}
\end{center}
From the results in Table 3 we see that, when the parameters $\eps_1=\eps_2=\eps_3$ approach $0$,  the perturbed minimum $f_{\eps,\min}$ goes to $0$ and   $\alpha_{\eps}$, with $\alpha_{\eps,2}=\alpha_{\eps,3}$, approaches $(0,-\sqrt{2},-\sqrt{2})$ which is a global minimizer of $f$ over $\mathbb{S}^1$.
\end{example}

\bibliographystyle{spmpsci}
\bibliography{references.bib}

\begin{thebibliography}{10}
\providecommand{\url}[1]{{#1}}
\providecommand{\urlprefix}{URL }
\expandafter\ifx\csname urlstyle\endcsname\relax
  \providecommand{\doi}[1]{DOI~\discretionary{}{}{}#1}\else
  \providecommand{\doi}{DOI~\discretionary{}{}{}\begingroup
  \urlstyle{rm}\Url}\fi

\bibitem{nie2014optimality}
Nie, J.: {Optimality conditions and finite convergence of Lasserre’s
  hierarchy}.
\newblock Mathematical programming \textbf{146}, 97--121 (2014)

\bibitem{lasserre2001global}
Lasserre, J.B.: Global optimization with polynomials and the problem of
  moments.
\newblock SIAM Journal on optimization \textbf{11}(3), 796--817 (2001)

\bibitem{parrilo2003semidefinite}
Parrilo, P.A.: Semidefinite programming relaxations for semialgebraic problems.
\newblock Mathematical programming \textbf{96}, 293--320 (2003)

\bibitem{parrilo2001minimizing}
Parrilo, P.A., Sturmfels, B.: Minimizing polynomial functions.
\newblock Algorithmic and quantitative real algebraic geometry, DIMACS Series
  in Discrete Mathematics and Theoretical Computer Science pp. 83--99 (2003)

\bibitem{nie2006}
Nie, J., Demmel, J., Sturmfels, B.: Minimizing polynomials via sum of squares
  over the gradient ideal.
\newblock Mathematical Programming \textbf{106}(3), 587--606 (2006)

\bibitem{lasserre2009moments}
Lasserre, J.B.: Moments, positive polynomials and their applications, vol.~1.
\newblock World Scientific (2009)

\bibitem{laurent2009}
Laurent, M.: Sums of squares, moment matrices and optimization over
  polynomials.
\newblock In: Emerging applications of algebraic geometry, pp. 157--270.
  Springer (2009)

\bibitem{basu2013}
Basu, S., Pollack, R., Roy, M.F.: Algorithms in Real Algebraic Geometry.
\newblock Springer Berlin, Heidelberg (2006)

\bibitem{safey2008computing}
Safey El~Din, M.: Computing the global optimum of a multivariate polynomial
  over the reals.
\newblock Proceedings of the twenty-first international symposium on Symbolic
  and algebraic computation pp. 71--78 (2008)

\bibitem{greuet2014}
Greuet, A., Safey El~Din, M.: Probabilistic algorithm for polynomial
  optimization over a real algebraic set.
\newblock SIAM Journal on Optimization \textbf{24}(3), 1313--1343 (2014)

\bibitem{berthomieu2022computing}
Berthomieu, J., Ferguson, A., Safey El~Din, M.: Computing the set of asymptotic
  critical values of polynomial mappings from smooth algebraic sets.
\newblock https://hal.sorbonne-universite.fr/hal-03598352/  (2022)

\bibitem{mai2022symbolic}
Mai, N.H.A.: A symbolic algorithm for exact polynomial optimization
  strengthened with {F}ritz {J}ohn conditions.
\newblock arXiv preprint arXiv:2206.02643  (2022)

\bibitem{nie2015hierarchy}
Nie, J.: The hierarchy of local minimums in polynomial optimization.
\newblock Mathematical programming \textbf{151}(2), 555--583 (2015)

\bibitem{he2015}
He, L., Wang, Y.: An efficient saddle point search method using kriging
  metamodels.
\newblock Proceedings of the ASME 2015 International Design Engineering
  Technical Conferences and Computers and Information in Engineering Conference
  \textbf{57045} (2015)

\bibitem{morrow2021}
Morrow, Z., Kwon, H.Y., Kelley, C.T., Jakubikova, E.: Efficient approximation
  of potential energy surfaces with mixed-basis interpolation.
\newblock Journal of Chemical Theory and Computation \textbf{17}(9), 5673--5683
  (2021)

\bibitem{kawaguchi2016}
Kawaguchi, K.: Deep learning without poor local minima.
\newblock Advances In Neural Information Processing Systems pp. 586--594 (2016)

\bibitem{yalcin2022}
Yalcin, B., Zhang, H., Lavaei, J., Sojoudi, S.: Factorization approach for
  low-complexity matrix completion problems: Exponential number of spurious
  solutions and failure of gradient methods.
\newblock Proceedings of the 25th International Conference on Artificial
  Intelligence and Statistics (AISTATS) 2022 \textbf{151} (2022)

\bibitem{mehta2022}
Mehta, D., Chen, T., Tang, T., Hauenstein, J.D.: The loss surface of deep
  linear networks viewed through the algebraic geometry lens.
\newblock IEEE Transactions on Pattern Analysis and Machine Intelligence
  \textbf{44}(9), 5664--5680 (2022)

\bibitem{martinez1994local}
Mart{\'\i}nez, J.M.: {Local minimizers of quadratic functions on Euclidean
  balls and spheres}.
\newblock SIAM Journal on Optimization \textbf{4}(1), 159--176 (1994)

\bibitem{phu2001stability}
Phu, H.X., Yen, N.D.: On the stability of solutions to quadratic programming
  problems.
\newblock Mathematical programming \textbf{89}, 385--394 (2001)

\bibitem{ahmadi2022complexity}
Ahmadi, A.A., Zhang, J.: On the complexity of finding a local minimizer of a
  quadratic function over a polytope.
\newblock Mathematical Programming \textbf{195}(1-2), 783--792 (2022)

\bibitem{pardalos1988checking}
Pardalos, P.M., Schnitger, G.: {Checking local optimality in constrained
  quadratic programming is NP-hard}.
\newblock Operations Research Letters \textbf{7}(1), 33--35 (1988)

\bibitem{pham2020local}
Pham, T.S.: Local minimizers of semi-algebraic functions from the viewpoint of
  tangencies.
\newblock SIAM Journal on Optimization \textbf{30}(3), 1777--1794 (2020)

\bibitem{cox2013}
Cox, D., Little, J., OShea, D.: {Ideals, Varieties, and Algorithms: An
  Introduction to Computational Algebraic Geometry and Commutative Algebra}.
\newblock Springer Science \& Business Media (2013)

\bibitem{bertsekas2nd}
Bertsekas, D.P.: Nonlinear Programming.
\newblock Athena Scientific (1999)

\bibitem{gianni89}
Gianni, P., Mora, T.: {Algebraic solution of systems of polynomial equations
  using Gröbner bases}.
\newblock In: Applied Algebra, Algebraic Algorithms and Error Correcting Codes,
  Proceedings of AAECC-5, volume 356 of LNCS, pp. 247--257. Springer (1989)

\bibitem{magron2023}
Magron, V., Safey El~Din, M., Vu, T.H.: Sum of squares decompositions of
  polynomials over their gradient ideals with rational coefficients.
\newblock SIAM Journal on Optimization \textbf{33}(1), 63--88 (2023)

\bibitem{rouillier1999}
Rouillier, F.: Solving zero-dimensional systems through the rational univariate
  representation.
\newblock Applicable Algebra in Engineering, Communication and Computing
  \textbf{9}(5), 433--461 (1999)

\bibitem{pham2016genericity}
Pham, T.S., Vui, H.H.: Genericity in polynomial optimization, vol.~3.
\newblock World Scientific (2016)

\bibitem{conn1988testing}
Conn, A.R., Gould, N.I., Toint, P.L.: Testing a class of methods for solving
  minimization problems with simple bounds on the variables.
\newblock Mathematics of computation \textbf{50}(182), 399--430 (1988)

\bibitem{wang2022cs}
Wang, J., Magron, V., Lasserre, J.B., Mai, N.H.A.: {CS-TSSOS: Correlative and
  term sparsity for large-scale polynomial optimization}.
\newblock ACM Transactions on Mathematical Software \textbf{48}(4), 1--26
  (2022)

\bibitem{shang2006note}
Shang, Y.W., Qiu, Y.H.: {A note on the extended Rosenbrock function}.
\newblock Evolutionary Computation \textbf{14}(1), 119--126 (2006)

\bibitem{nie2009algebraic}
Nie, J., Ranestad, K.: Algebraic degree of polynomial optimization.
\newblock SIAM Journal on Optimization \textbf{20}(1), 485--502 (2009)

\bibitem{tapia1974stable}
Tapia, R.: {A stable approach to Newton's method for general mathematical
  programming problems in $\R^n$}.
\newblock Journal of optimization theory and applications \textbf{14}, 453--476
  (1974)

\bibitem{fukuda2017note}
Fukuda, E.H., Fukushima, M.: A note on the squared slack variables technique
  for nonlinear optimization.
\newblock Journal of the Operations Research Society of Japan \textbf{60}(3),
  262--270 (2017)

\bibitem{dang2016well}
Dang, V.D., Ha, H.V., Pham, T.S.: Well-posedness in unconstrained polynomial
  optimization problems.
\newblock SIAM Journal on Optimization \textbf{26}(3), 1411--1428 (2016)

\bibitem{li2015new}
Li, G., Mordukhovich, B., Pham, T.S.: {New fractional error bounds for
  polynomial systems with applications to H{\"o}lderian stability in
  optimization and spectral theory of tensors}.
\newblock Mathematical Programming \textbf{153}(2), 333--362 (2015)

\bibitem{lee2017generic}
Lee, G.M., Pham, T.S.: Generic properties for semialgebraic programs.
\newblock SIAM Journal on Optimization \textbf{27}(3), 2061--2084 (2017)

\end{thebibliography}

\end{document}